\DeclareSymbolFontAlphabet{\mathbb}{AMSb}	
\DeclareSymbolFontAlphabet{\mathbbl}{bbold}	
\newtheorem{proposition}{Proposition}[section]
  \newtheorem{theorem}[proposition]{Theorem}
  \newtheorem{corollary}[proposition]{Corollary}
  \newtheorem{lemma}[proposition]{Lemma}
\theoremstyle{definition}
  \newtheorem{definition}[proposition]{Definition}
  \newtheorem{remark}[proposition]{Remark}
\newcommand{\cst}{\ifmmode\mathrm{C}^*\else{$\mathrm{C}^*$}\fi}
\newcommand{\st}{\;\vline\;}
\newcommand{\tens}{\otimes}
\newcommand{\vtens}{\,\bar{\tens}\,}
\newcommand{\atens}{\tens_{\text{\rm{\tiny{alg}}}}}
\newcommand{\id}{\mathrm{id}}
\newcommand{\comp}{\circ}
\newcommand{\I}{\mathds{1}}
\newcommand{\bh}{\boldsymbol{h}}
\newcommand{\cc}{\mathrm{c}}
\newcommand{\qqquad}{\quad\qquad}
\newcommand{\tp}{\;\!\xymatrix{*+<.1ex>[o][F-]{\raisebox{-1.4ex}{$\scriptstyle\top$}}}\;\!}
\newcommand{\ww}{\mathrm{W}}
\newcommand{\Limits}{\limits}			
\newcommand{\hh}[1]{\widehat{#1}}
\newcommand{\is}[2]{\left\langle#1\,\vline\,#2\right\rangle}
\newcommand{\CC}{\mathbb{C}}
\newcommand{\NN}{\mathbb{N}}
\newcommand{\GG}{\mathbb{G}}
\newcommand{\HH}{\mathbb{H}}
\newcommand{\XX}{\mathbb{X}}
\newcommand{\GGamma}{\mathbbl{\Gamma}}
\newcommand{\LLambda}{\mathbbl{\Lambda}}
\newcommand{\cZ}{\mathscr{Z}}
\newcommand{\sA}{\mathsf{A}}
\newcommand{\sM}{\mathsf{M}}
\newcommand{\sN}{\mathsf{N}}
\newcommand{\sH}{\mathsf{H}}
\newcommand{\sK}{\mathsf{K}}
\newcommand{\Ralpha}{\sim_\alpha}
\newcommand\Ind{\mathcal{I}}
\newcommand\Jnd{\mathcal{J}}
\newcommand{\bp}{\boldsymbol{p}}
\newcommand{\bpi}{\boldsymbol{\pi}}
\newcommand{\RLambda}{\sim_{\LLambda}}
\DeclareMathOperator{\B}{B}
\DeclareMathOperator{\Irr}{Irr}
\DeclareMathOperator{\supp}{supp}
\DeclareMathOperator{\Pol}{Pol}
\DeclareMathOperator{\Linf}{\mathnormal{L}^\infty\;\!\!}
\DeclareMathOperator{\linf}{\ell^\infty\;\!\!}
\DeclareMathOperator{\Ltwo}{\mathnormal{L}^2\;\!\!}
\DeclareMathOperator{\c0}{c_0}
\DeclareMathOperator{\Tr}{Tr}
\DeclareMathOperator{\z}{\mathnormal{z}}
\DeclareMathOperator{\mult}{mult}
\numberwithin{equation}{section}
\begin{document}

\author{Kenny De Commer}
\address{Vakgroep Wiskunde, Vrije Universiteit Brussel, Belgium}
\email{kenny.de.commer@vub.ac.be}

\author{Pawe{\l} Kasprzak}
\address{Department of Mathematical Methods in Physics, Faculty of Physics, University of Warsaw, Poland}
\email{pawel.kasprzak@fuw.edu.pl}

\author{Adam Skalski}
\address{Institute of Mathematics of the Polish Academy of Sciences, Warsaw, Poland}
\email{a.skalski@impan.pl}

\author{Piotr M.~So{\l}tan}
\address{Department of Mathematical Methods in Physics, Faculty of Physics, University of Warsaw, Poland}
\email{piotr.soltan@fuw.edu.pl}

\title[Quantum Clifford theory]{Quantum actions on discrete quantum spaces and a generalization of Clifford's theory of representations}

\subjclass[2010]{Primary: 46L89, Secondary: 20G42, 20C99}

\keywords{compact quantum group; discrete quantum space; Clifford's theory; irreducible representations}

\begin{abstract}
To any action of a compact quantum group on a von Neumann algebra which is a direct sum of factors we associate an equivalence relation corresponding to the partition of a space into orbits of the action. We show that in case all factors are finite-dimensional (i.e.~when the action is on a discrete quantum space) the relation has finite orbits. We then apply this to generalize the classical theory of Clifford, concerning the restrictions of representations to normal subgroups, to the framework of quantum subgroups of discrete quantum groups, itself extending the context of closed normal quantum subgroups of compact quantum groups. Finally, a link is made between our equivalence relation in question and another equivalence relation defined by R.~Vergnioux.
\end{abstract}

\maketitle

\section{Introduction}

The story and motivation behind this paper mirror to an extent those behind its classical, almost eighty years old predecessor: Clifford theory, as developed in the article \cite{clifford}, was clearly inspired by Frobenius's induced representations. Let us recall that \cite{clifford} concerns questions around the decomposition of the restriction of a given irreducible representation of a group $G$ to a normal subgroup $H\subset G$ (for a modern treatment of the results of that paper for finite groups we refer to \cite{Ceccherini}). Clifford considers general groups, but only finite-dimensional representations. We will mostly focus on the compact case, where finite dimensionality is automatic. The key notion appearing in this context is that of orbits of the adjoint action of $G$ on $\Irr{H}$, the set of (equivalence classes of) irreducible representations of $H$.

Since the arrival of compact quantum groups on the scene in the 1980s and the development of a satisfactory theory of locally compact quantum groups at the turn of the century, there has been a lot of interest in the study of various aspects of representation theory in this, quantum, context. In particular the articles \cite{KustermansInduced} and \cite{Vaes-induction} developed in full a quantum counterpart of the induction theory of Mackey -- itself generalizing the aforementioned work of Frobenius for finite groups. The induction mechanisms proposed by Kustermans and Vaes are necessarily somewhat complicated, and in \cite{VergniouxVoigt} a simpler picture was established for discrete quantum groups. On the other hand, in the recent article \cite{KalKS} a notion of an open quantum subgroup was proposed (encompassing in particular all quantum subgroups of discrete quantum groups), allowing for a framework in which one can use Rieffel's approach to induction (\cite{KKSinduced}). All these developments made natural the need to understand better the relation between open and discrete induction, as studied respectively in \cite{KKSinduced} and \cite{VergniouxVoigt}, and brought us to developing a quantum counterpart of Clifford theory.

Classical Clifford theory deals with the relationship between representations of a group $G$ and their restrictions to a normal subgroup $H$ of $G$, the main tool being the action of $G$ on irreducible representations of $H$ by composition with a $G$-inner automorphism. In our approach we first look at a pair $(\GG,\HH)$ consisting of a compact quantum group $\GG$ and its closed normal quantum subgroup $\HH$. This pair gives rise to a discrete quantum group $\GGamma=\hh{\GG}$ with a quantum subgroup $\LLambda=\hh{\GG/\HH}$. The quantum subgroup $\LLambda\subset\GGamma$ is normal (\cite[Page 46]{extensions}) and we can recover $(\GG,\HH)$ as $\GG=\hh{\GGamma}$ and $\HH=\hh{\GGamma/\LLambda}$. Restriction of representations from $\GG$ to $\HH$ translates to restricting representations of $\linf(\GGamma)$ to the subalgebra $\linf(\GGamma/\LLambda)$. Our generalization will consist of dropping the assumption of normality of $\LLambda$. In particular this introduces a difference between left and right quotients $\LLambda\backslash\GGamma$ and $\GGamma/\LLambda$ and it turns out that $\LLambda\backslash\GGamma$ is more suited to our conventions. This difference, however, is not of importance, and in any case we have $R^\GGamma\bigl(\linf(\GGamma/\LLambda)\bigr)=\linf(\LLambda\backslash\GGamma)$, where $R^\GGamma$ is the unitary antipode of $\GGamma$. In this context we will introduce an action of the compact quantum group $\GG$ on $\LLambda\backslash\GGamma$ corresponding to the action known from Clifford theory, and introduce the notion of \emph{orbit} for this action. Then we will be in a position to prove a quantum analog of Clifford's theorem on restriction of irreducible representations.

Let us briefly describe the contents of the paper. In Section \ref{intro} we introduce the necessary language and notation as well as list some standard results from the theory of compact and discrete quantum groups. Section \ref{Kenny} contains a proof of a fundamental result saying that if a compact quantum group acts ergodically on a von Neumann algebra $\sN$ with a finite-dimensional direct summand then $\sN$ itself must be finite-dimensional (Theorem \ref{mainthm}). In Section \ref{Adam} we introduce and study the quantum group analog of the relation of being in the same orbit with respect to a compact quantum group action on a direct sum of factors. In Section \ref{QCliffordTheory} we give two applications of the theory. First we generalize the main result of Clifford theory concerning restrictions of representations to a normal subgroup. Then, in Section \ref{VVeq} we exhibit the connection of the equivalence arising in quantum Clifford theory with an equivalence relation of Vergnioux (\cite{orientation}).

\section{Notation and preliminaries}\label{intro}

For a von Neumann algebra $\sM$ the Banach space of normal functionals on $\sM$ will be denoted by $\sM_*$, and for $\omega\in\sM_*$ we define $\overline{\omega}\in\sM_*$ by the standard formula $\overline{\omega}(x)=\overline{\omega(x^*)}$ for all $x\in\sM$.
The center of $\sM$ will be denoted $\cZ(\sM)$. The tensor product of von Neumann algebras will be denoted $\vtens$ while the symbol $\tens$ will be reserved for minimal tensor products of \cst-algebras and for tensor products of maps in various settings. On one occasion we will be considering a tensor product of a von Neumann algebra with a $\sigma$-weakly closed operator space. The symbol $\vtens$ will in this case denote the $\sigma$-weak completion of the algebraic tensor product in its natural spatial implementation.

All scalar products will be linear on the right. We will often use Sweedler notation and leg numbering notation familiar from Hopf algebra theory and quantum group literature. For a Hilbert space $\sH$ and vectors $\xi,\eta\in\sH$ the symbol $\omega_{\xi,\eta}$ will denote the continuous functional on $\B(\sH)$ given by the formula $T\mapsto\is{\xi}{T\eta}$.

Given a family of \cst-algebras $\{\sA_i\}_{i\in\Ind}$ the symbol $\bigoplus\Limits_{i\in\Ind}\sA_i$ will denote the $\c0$-direct sum of the family $\{\sA_i\}_{i\in\Ind}$, while $\prod\Limits_{i\in\Ind}\sA_i$ will be the $\ell^\infty$-direct sum of the same family. We remark that if $(\sM_i)_{i\in\Ind}$ is a family of matrix algebras then any (not necessarily unital) $\sigma$-weakly closed $*$-subalgebra of $\prod\Limits_{i\in\Ind}\sM_i$ is again isomorphic to a product of matrix algebras.

Throughout the paper $\GG$ will denote a compact quantum group. We will study it mainly via the associated von Neumann algebra $\Linf(\GG)$, representing the ``algebra of bounded measurable functions on $\GG$'', equipped with coproduct $\Delta_\GG:\Linf(\GG)\to\Linf(\GG)\vtens\Linf(\GG)$. The Haar state on $\GG$ will be denoted by $\bh_\GG$ and $\Linf(\GG)$ is assumed to act on the GNS space of $\bh_\GG$ denoted $\Ltwo(\GG)$. A \emph{(finite-dimensional) unitary representation} $\pi$ of $\GG$ on a (finite-dimensional) Hilbert space $\sH$ is a unitary matrix $U = U^{\pi}\in\B(\sH)\tens\Linf(\GG)$ such that $(\id\tens\Delta_\GG)U=U_{12}U_{13}$. Its \emph{coefficients} are elements of the form $(\omega_{\xi,\eta}\tens\id)U$, with $\xi,\eta\in\sH$. There are natural notions of unitary equivalence and irreducibility of representations of compact quantum groups (\cite{cqg}), and any irreducible representation is finite-dimensional. We will denote by $\Irr{\GG}$ the set of all equivalence classes of irreducible representations of $\GG$ and tacitly assume that for every class $\iota\in\Irr{\GG}$ a representative has been chosen and fixed. We will often denote this representative by the same symbol as the class itself. Furthermore, for $\iota\in\Irr{\GG}$ the symbol $n_\iota$ denotes the (classical) dimension of the corresponding representation. The keystone of Woronowicz's quantum Peter-Weyl theory is the fact that coefficients of irreducible representations of $\GG$ span a dense unital $*$-subalgebra of $\Linf(\GG)$, denoted $\Pol(\GG)$, and $\bigl(\Pol(\GG),\bigl.\Delta_\GG\bigr|_{\Pol(\GG)}\bigr)$ is a Hopf $*$-algebra, whose antipode we will denote by $S$. We say that $\GG$ is finite if $\Linf(\GG)$ (equivalently $\Pol(\GG)$) is finite-dimensional, and that $\GG$ is of Kac type if $\bh_\GG$ is a tracial state.

The algebra $\Linf(\GG)$ is also equipped with a map $R^\GG$, called the \emph{unitary antipode}, leaving $\Pol(\GG)$ invariant, which may be used to define the \emph{contragredient representation} $\pi^\cc$ of a given representation $\pi$, see \cite{pseudogr,cqg}. There is also a natural notion of a tensor product of two representations, which we denote $\pi_1\tp\pi_2$. The operations of taking contragredient representations and tensor product pass to equivalence classes.

To every compact quantum group $\GG$ we associate the dual \emph{discrete} quantum group $\GGamma = \hh{\GG}$, which is usually studied either via its algebra $\c0(\GGamma)\cong\bigoplus\Limits_{\iota\in\Irr{\GG}}M_{n_\iota}(\CC)$ of ``functions vanishing at infinity'' or its algebra $\linf(\GGamma)\cong\prod\Limits_{\iota\in\Irr{\GG}}M_{n_\iota}(\CC)$ of ``bounded functions''. Both of these algebras are then equipped with a natural coproduct $\Delta_{\GGamma}$, allowing to take the tensor product of (normal) algebra representations, and a unitary antipode $R^{\GGamma}$. There is then a natural one-to-one correspondence between (irreducible) representations of $\GG$ and (irreducible) normal representations of the von Neumann algebra $\linf(\GGamma)$, respecting tensor products and contragredients of representations. More precisely, each representation $U$ of $\GG$ is of the form $U=U^{\pi}=(\pi\tens\id)(\ww^\GG)$ for a normal representation $\pi$ of $\linf(\GGamma)$, where $\ww^\GG\in\Linf(\GGamma)\vtens\Linf(\GG)$ is the \emph{right regular representation} (\cite[Section 3]{q-lorentz}). Then tensor product of $U^\pi$ and $U^\sigma$ is $U^{(\pi\tens\sigma)\comp\Delta_\GGamma}$ and $(U^\pi)^\cc=U^{\top\circ \pi\comp{R^\GGamma}}$ up to equivalence, with $\top$ the transpose operation. This justifies the notation introduced above, and in what follows we will identify $U$ with the corresponding representation $\pi$.

By an \emph{action} of a compact quantum group $\GG$ on a von Neumann algebra $\sM$ we mean an injective, normal unital $*$-homomorphism $\alpha:\sM\to\sM\vtens\Linf(\GG)$ satisfying the condition
\begin{equation}\label{actionEquation}
(\alpha\tens\id_{\Linf(\GG)})\comp\alpha=(\id_\sM\tens\Delta_\GG)\comp\alpha.
\end{equation}

We shall often write $\Linf(\XX)$ for $\sM$, thus defining a ``quantum (measure) space'' $\XX$. We will also write $\Pol(\XX)$ for the \emph{algebraic core} of $\Linf(\XX)$ (see \cite{KennyLectures,podles}). It is a dense unital $*$-subalgebra of $\Linf(\XX)$ (called also the \emph{Podle\'s subalgebra}) such that $\alpha$ restricts to a coaction $\bigl.\alpha\bigr|_{\Pol(\XX)}:\Pol(\XX)\to\Pol(\XX)\atens\Pol(\GG)$ of the Hopf algebra $\Pol(\GG)$. We use a variation of Sweedler notation: for $x\in\Pol(\XX)$ we write $\alpha(x)=x_{(0)}\tens{x_{(1)}}$.

The action $\alpha$ by a compact quantum group $\GG$ as above is said to be \emph{ergodic} if the fixed point subalgebra $\sM^\alpha=\bigl\{x\in\sM\st\alpha(x)=x\tens\I\bigr\}$ equals $\CC\I$. Then $\Linf(\XX)$ admits a unique $\alpha$-invariant state, which we will denote $\bh_\XX$. It is determined by the following condition
\[
\bh_\XX(x)\I=(\id\tens\bh_\GG)\bigl(\alpha(x)\bigr),\qqquad{x}\in\Linf(\XX).
\]
The GNS Hilbert space of $\bh_\XX$ will be denoted by $\Ltwo(\XX)$. In what follows we shall view $\Linf(\XX)$ as a von Neumann subalgebra of $\B(\Ltwo(\XX))$.

Given two compact quantum groups $\GG$ and $\HH$ we say that $\HH$ is a (closed quantum) subgroup of $\HH$ if there exists a surjective Hopf $*$-algebra map $\Pol(\GG)\to\Pol(\HH)$. This is equivalent to the existence of an injective normal embedding, respecting the coproducts, of the von Neumann algebra $\linf(\hh{\HH})$ into $\linf(\hh{\GG})$. Note that in that case we can naturally talk about restricting representations of $\GG$ to $\HH$, which is the main point of interest for this paper. The subgroup $\HH$ is said to be normal if in addition $\ww^\GG\bigl(\linf(\hh{\HH})\tens\I\bigr){\ww^\GG}^*\subset\linf(\hh{\HH})\vtens\Linf(\GG)$ (\cite{WangFree,Wang2014,centers}), in which case one can define a quotient compact quantum group $\GG/\HH$. Similarly given two discrete quantum groups $\GGamma$ and $\LLambda$ we say that $\LLambda$ is a (closed quantum) subgroup of $\GGamma$ if we have an injective Hopf $*$-algebra map $\Pol(\hh{\LLambda})\to\Pol(\hh{\GGamma})$. This is equivalent to the existence of a surjective $*$-homomorphism from $\c0(\GGamma)$ onto $\c0(\LLambda)$, once again intertwining respective coproducts. Most of the above concepts have natural generalizations to arbitrary locally compact quantum groups, which we will briefly use in the beginning of Section \ref{QCliffordTheory}.

For more information on actions of compact quantum groups and related topics we refer the reader to the lecture notes \cite{KennyLectures}. The topics of quantum subgroups are thoroughly covered in \cite{DKSS} and information on normal quantum subgroups, inner automorphisms etc.~can be found in \cite{centers} and references therein. We will at some point use the theory of locally compact quantum groups in the sense of Kustermans and Vaes (\cite{KV}).

\section{Ergodic actions of a compact quantum group on quantum spaces with a discrete component}\label{Kenny}

In this section we show that if a compact quantum group acts ergodically on a ``quantum space $\XX$ with a discrete component'', that is on a von Neumann algebra $\Linf(\XX)$ of the form ${M_n(\CC)}\oplus\sN$ for some $n\in\NN$, then $\XX$ must be finite (i.e.~$\Linf(\XX)$ is finite-dimensional).

Let $\Linf(\XX)$ be a von Neumann algebra and $\alpha:\Linf(\XX)\to\Linf(\XX)\vtens\Linf(\GG)$ an ergodic action. Denote by $\Lambda_\XX$ and $\Lambda_\GG$ the GNS maps corresponding to the invariant state $\bh_\XX$ on $\Linf(\XX)$ and the Haar measure $\bh_\GG$ and consider the isometry
\[
G:\Ltwo(\XX)\tens\Ltwo(\XX)\longrightarrow\Ltwo(\XX)\tens\Ltwo(\GG),
\]
defined by
\[
\Lambda_{\XX}(x)\tens\Lambda_{\XX}(y)\longmapsto(\Lambda_{\XX}\tens\Lambda_{\GG})\bigl(\alpha(y)(x\tens\I)\bigr),\qqquad{x,y}\in\Pol(\XX).
\]
Since
\[
(\id\tens\omega_{\Lambda_{\XX}(z),\Lambda_{\XX}(y)})(G)=(\id\tens\bh_{\GG})\bigl((\I\tens{z^*})\alpha(y)\bigr),\qqquad y,z\in\Pol(\XX),
\]
it follows that
\[
G\in\Linf(\XX)\vtens\B(\Ltwo(\XX),\Ltwo(\GG)),
\]
so in particular we can consider for $\omega\in\Linf(\XX)_*$ the element
\[
(\omega\tens\id)(G)\in\B(\Ltwo(\XX),\Ltwo(\GG)).
\]
An easy computation shows that
\begin{equation}
(\omega\tens\id)(G)\Lambda_\XX(x)=\Lambda_\XX\bigl((\omega\tens\id)(\alpha(x)\bigr),\qqquad{x}\in\Linf(\XX).
\label{adjoint} \end{equation}

For each $\omega\in\Linf(\XX)_*$ consider
\[
L_{\omega}=(\omega\tens\id)(G^*):\Ltwo(\GG)\longrightarrow\Ltwo(\XX).
\]
Note that using \eqref{adjoint} one can easily check that the map $\omega \mapsto L_\omega$ is injective.

Let $\c0(\hh{\XX})$ denote the norm-closure of the set of the operators $L_\omega$:
\[
\c0(\hh{\XX})=\overline{\bigl\{(\omega\tens\id)(G^*)\st\omega\in\Linf(\XX)_*\bigr\}}^{\scriptscriptstyle{\|\cdot\|}}.
\]
The notation is justified by the fact that $\hh{\XX}$ is in a sense a discrete object, as follows from Lemma \ref{lem:discreteness} below.

Applying the construction above to $\XX=\GG$ we get a copy of $\c0(\hh{\GG})$ (more precisely the \cst-algebra associated to the left regular representation of $\GG$ on $\Ltwo(\GG)$). In particular, for $\chi\in\Linf(\GG)_*$ we shall write $L_\chi\in\c0(\hh{\GG})$.

\begin{lemma}
For $a\in\Pol(\GG)$, $x,y\in\Pol(\XX)$, $\omega\in\Linf(\XX)_*$ and $\chi\in\Linf(\GG)_*$ the following equalities hold:
\begin{subequations}
\begin{align}
L_{\bh_{\XX}(\,\cdot\,x)}\Lambda_{\GG}(a)&=\bh_{\GG}\bigl(S(x_{(1)})a\bigr)\Lambda_{\XX}(x_{(0)}),\label{eq:prop1}\\
L_{\omega}L_{\chi}&=L_{\omega\cdot\chi},\label{eq:prop2}\\
L_{\bh_{\XX}(\,\cdot\,y)}^*L_{\bh_{\XX}(\,\cdot\,x)}&=\bh_{\XX}(y^*x_{(0)})L_{\bh_{\GG}(\,\cdot\,x_{(1)})}
=\bh_{\XX}(y_{(0)}^*x)L_{\bh_{\GG}(\,\cdot\,y_{(1)})}^*,\label{EqInProd}
\end{align}
where $\omega\cdot\chi=(\omega\tens\chi)\comp\alpha$.
\end{subequations}
\end{lemma}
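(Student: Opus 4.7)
All three identities follow from a single \emph{strong invariance} formula for the ergodic action $\alpha$:
\begin{equation*}
\bh_\XX(u_{(0)}\,x)\,u_{(1)} \;=\; \bh_\XX(u\,x_{(0)})\,S(x_{(1)}) \qquad(u,x\in\Pol(\XX)),
\end{equation*}
an equality in $\Pol(\GG)$. I would establish this first, as the main technical step. Combining the coaction axiom \eqref{actionEquation}, the counit identity $(\id\tens\epsilon)\alpha=\id$, and the antipode identity $m\comp(\id\tens S)\Delta_\GG=\epsilon(\cdot)\I$ yields $x_{(0)(0)}\tens x_{(0)(1)}S(x_{(1)})=x\tens\I$ in $\Pol(\XX)\atens\Pol(\GG)$, hence $\alpha(u)(x\tens\I)=\alpha(u\,x_{(0)})(\I\tens S(x_{(1)}))$. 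Applying $\bh_\XX\tens\id$ and invoking the $\alpha$-invariance $(\bh_\XX\tens\id)\alpha=\bh_\XX(\cdot)\I$ then produces the displayed formula.

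Given strong invariance, for \eqref{eq:prop1} I write $\bh_\XX(\cdot\,x)=\omega_{\Lambda_\XX(\I),\Lambda_\XX(x)}$ and unfold
\begin{equation*}
\is{\Lambda_\XX(z)}{L_{\bh_\XX(\cdot\,x)}\Lambda_\GG(a)} = \is{G(\Lambda_\XX(\I)\tens\Lambda_\XX(z))}{\Lambda_\XX(x)\tens\Lambda_\GG(a)} = \bh_\XX(z_{(0)}^*x)\,\bh_\GG(z_{(1)}^*a);
\end{equation*}
strong invariance applied with $u=z^*$ (using $\alpha(z)^*=\alpha(z^*)$) rewrites this as $\bh_\GG(S(x_{(1)})a)\,\bh_\XX(z^*x_{(0)})$, which is the matrix coefficient of $\bh_\GG(S(x_{(1)})a)\Lambda_\XX(x_{(0)})$ against $\Lambda_\XX(z)$. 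For \eqref{eq:prop2}, by $\sigma$-weak density it suffices to treat $\omega=\bh_\XX(\cdot\,x)$ and $\chi=\bh_\GG(\cdot\,c)$; two applications of \eqref{eq:prop1} (on $\GG$ then on $\XX$) give an explicit formula for $L_\omega L_\chi\Lambda_\GG(a)$, while strong invariance identifies $\omega\cdot\chi=\bh_\XX(\cdot\,e)$ with $e=x_{(0)}\bh_\GG(S(x_{(1)})c)$ and yields the analogous formula for $L_{\omega\cdot\chi}\Lambda_\GG(a)$. Matching the two expressions reduces, via the coaction axiom $x_{(0)(0)}\tens x_{(0)(1)}\tens x_{(1)}=x_{(0)}\tens x_{(1)(1)}\tens x_{(1)(2)}$ applied to $x$, to the strong right invariance $\bh_\GG(y_{(1)}p)\,y_{(2)}=\bh_\GG(y\,p_{(1)})\,S(p_{(2)})$ of the Haar state $\bh_\GG$, applied with $y=S(x_{(1)})$ and $p=c$.

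For \eqref{EqInProd}, I use \eqref{eq:prop1} together with the identity $L_{\bh_\XX(\cdot\,y)}^*\Lambda_\XX(w)=\Lambda_\GG(\bh_\XX(y^*w_{(0)})\,w_{(1)})$, computed directly from the definition of $G$ using $\overline{\bh_\XX(\cdot\,y)}=\omega_{\Lambda_\XX(y),\Lambda_\XX(\I)}$, to obtain
\begin{equation*}
L_{\bh_\XX(\cdot\,y)}^*L_{\bh_\XX(\cdot\,x)}\Lambda_\GG(a)=\bh_\GG(S(x_{(1)})a)\,\Lambda_\GG\bigl(\bh_\XX(y^*x_{(0)(0)})\,x_{(0)(1)}\bigr).
\end{equation*}
The coaction axiom collapses the double Sweedler inside $\Lambda_\GG$, and a further use of \eqref{eq:prop1} applied to $\GG$ itself produces the first equality in \eqref{EqInProd}. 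The second equality then follows by taking the adjoint of the first with $x$ and $y$ exchanged, using $\overline{\bh_\XX(x^*y_{(0)})}=\bh_\XX(y_{(0)}^*x)$.
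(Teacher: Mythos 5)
Your argument is correct. For \eqref{eq:prop1} and \eqref{EqInProd} it is essentially the paper's computation: the ``strong invariance'' identity $\bh_\XX(u_{(0)}\,x)u_{(1)}=\bh_\XX(u\,x_{(0)})S(x_{(1)})$ that you isolate up front is exactly the manipulation the paper performs inline (inserting $x_{(1)}S(x_{(2)})$ and invoking $\alpha$-invariance of $\bh_\XX$), and the remaining steps --- the matrix coefficient against $\Lambda_\XX(z)$, the adjoint identity $L_{\bh_\XX(\,\cdot\,y)}^*\Lambda_\XX(w)=\Lambda_\GG\bigl(\bh_\XX(y^*w_{(0)})w_{(1)}\bigr)$, and obtaining the second half of \eqref{EqInProd} by taking adjoints with $x$ and $y$ exchanged --- coincide with what is done there. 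Where you genuinely diverge is \eqref{eq:prop2}. The paper proves it in two lines for \emph{arbitrary} $\omega\in\Linf(\XX)_*$ and $\chi\in\Linf(\GG)_*$ by applying the adjoint formula \eqref{adjoint} twice to $L_\chi^*L_\omega^*\Lambda_\XX(x)$ and using coassociativity of $\alpha$; no density reduction and no property of $\bh_\GG$ beyond its definition is needed. Your route --- restricting to $\omega=\bh_\XX(\,\cdot\,x)$ and $\chi=\bh_\GG(\,\cdot\,c)$, identifying $\omega\cdot\chi=\bh_\XX(\,\cdot\,e)$ with $e=x_{(0)}\bh_\GG(S(x_{(1)})c)$ via strong invariance of the action, and matching the two sides through the strong right invariance $\bh_\GG(y_{(1)}p)y_{(2)}=\bh_\GG(yp_{(1)})S(p_{(2)})$ of the Haar state applied at $y=S(x_{(1)})$ --- does close up: the functionals $\bh_\XX(\,\cdot\,x)$, $x\in\Pol(\XX)$, are norm-dense in $\Linf(\XX)_*$ because $\Lambda_\XX(\I)$ is separating (the annihilator of this subspace is trivial), and both $\omega\mapsto L_\omega$ and $(\omega,\chi)\mapsto\omega\cdot\chi$ are norm-continuous, so the reduction is legitimate; the Haar-state identity you quote is the standard one $S\bigl((\bh_\GG\tens\id)((a\tens\I)\Delta_\GG(b))\bigr)=(\bh_\GG\tens\id)(\Delta_\GG(a)(b\tens\I))$ read backwards. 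Still, this imports a nontrivial external fact and a density argument where a direct computation suffices, so for \eqref{eq:prop2} I would recommend the shorter route via \eqref{adjoint}.
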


\begin{proof}
In order to get \eqref{eq:prop1} we compute
\begin{align*}
\is{\Lambda_{\XX}(y)}{L_{\bh_{\XX}(\,\cdot\,x)}\Lambda_{\GG}(a)}&=\is{\Lambda_{\XX}(\I)\tens\Lambda_{\XX}(y)}{G^*\Lambda_{\XX}(x)\tens\Lambda_{\GG}(a)}\\
&=\is{\Lambda_{\XX}(y_{(0)})\tens\Lambda_{\GG}(y_{(1)})}{\Lambda_{\XX}(x)\tens\Lambda_{\GG}(a)}\\
&=\bh_{\XX}(y_{(0)}^*x)\bh_{\GG}(y_{(1)}^*a)\\
&=\bh_{\XX}(y_{(0)}^*x_{(0)})\bh_{\GG}\bigl(y_{(1)}^*x_{(1)}S(x_{(2)})a\bigr)\\
&=\bh_{\XX}(y ^*x_{(0)})\bh_{\GG}\bigl(S(x_{(1)})a\bigr)\\
&=\is{\Lambda_{\XX}(y)}{\bh_{\GG}\bigl(S(x_{(1)})a\bigr)\Lambda_{\XX}(x_{(0)})},
\end{align*}
where in the fifth equality we used $\alpha$-invariance of $\bh_{\XX}$.

In order to prove \eqref{eq:prop2} we compute
\begin{align*}
L_{\chi}^*L_{\omega}^*\Lambda_{\XX}(x)&=\overline{\omega}(x_{(0)})L_{\chi}^*\Lambda_{\GG}(x_{(1)})\\
&=\overline{\omega}(x_{(0)})\overline{\chi}(x_{(1)})\Lambda_{\GG}(x_{(2)})\\
&=L_{\omega\cdot\chi}^*\Lambda_{\XX}(x).
\end{align*}

Finally, in order to prove the first equality of \eqref{EqInProd} we compute
\begin{align*}
L_{\bh_{\XX}(\,\cdot\,y)}^*L_{\bh_{\XX}(\,\cdot\,x)}\Lambda_{\GG}(a)
&=L_{\bh_{\XX}(\,\cdot\,y)}^*\bh_{\GG}\bigl(S(x_{(1)})a\bigr)\Lambda_{\XX}(x_{(0)})\\
&=\bh_{\XX}(y^*x_{(0)})\bh_{\GG}\bigl(S(x_{(2)})a\bigr)\Lambda_{\GG}(x_{(1)})\\
&=\bh_{\XX}(y^*x_{(0)})L_{\bh_{\GG}(\,\cdot\,x_{(1)})}\Lambda_{\GG}(a).
\end{align*}
The second equality of \eqref{EqInProd} follows from the first one by taking adjoints of both sides.
\end{proof}

\begin{corollary}
For all $a,b\in\c0(\hh{\XX})$ and $c\in\c0(\hh{\GG})$ we have $a^*b\in\c0(\hh{\GG})$ and $ac\in\c0(\hh{\XX})$. In particular $\c0(\hh{\XX})$ forms a Hilbert \cst-module over $\c0(\hh{\GG})$.
\end{corollary}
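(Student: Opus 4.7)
The proof splits according to the two assertions. For the module containment $ac\in\c0(\hh{\XX})$, identity \eqref{eq:prop2} gives $L_\omega L_\chi = L_{\omega\cdot\chi}$ on generators; since $\alpha$ is normal, $\omega\cdot\chi = (\omega\tens\chi)\comp\alpha$ again lies in $\Linf(\XX)_*$, so $L_{\omega\cdot\chi}\in\c0(\hh{\XX})$ by definition. Bilinearity, joint norm continuity of operator multiplication, and the norm-closedness of $\c0(\hh{\XX})$ then extend this to arbitrary $a\in\c0(\hh{\XX})$ and $c\in\c0(\hh{\GG})$. No density argument is needed here.

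For the inner-product containment $a^*b\in\c0(\hh{\GG})$, identity \eqref{EqInProd} handles the special case $a=L_{\bh_\XX(\,\cdot\,y)}$ and $b=L_{\bh_\XX(\,\cdot\,x)}$ with $x,y\in\Pol(\XX)$: the right-hand side is a finite sum of scalar multiples of operators $L_{\bh_\GG(\,\cdot\,x_{(1)})}$, each of which lies in $\c0(\hh{\GG})$ by the same definition applied with $\XX$ replaced by $\GG$. To propagate the conclusion to all $a,b\in\c0(\hh{\XX})$ via joint norm continuity of $(a,b)\mapsto a^*b$ and norm-closedness of $\c0(\hh{\GG})$, I would establish that
\[
V=\mathrm{span}\bigl\{L_{\bh_\XX(\,\cdot\,x)}:x\in\Pol(\XX)\bigr\}
\]
is norm-dense in $\c0(\hh{\XX})$.

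The density of $V$ is the only non-formal step and hence the main obstacle. Since $\omega\mapsto L_\omega$ is a contraction from $\Linf(\XX)_*$ into $\B(\Ltwo(\GG),\Ltwo(\XX))$, density would follow at once from norm-density of $\bigl\{\bh_\XX(\,\cdot\,x):x\in\Pol(\XX)\bigr\}$ in the predual; for a generic faithful normal state such one-sided families are only $\sigma$-weakly dense in $\Linf(\XX)_*$, so one must exploit the ambient structure. My approach is to use the standing ergodicity of $\alpha$ in this section: a Boca-type finiteness of spectral multiplicities decomposes $\Pol(\XX)$ as an algebraic direct sum of finite-dimensional isotypic subspaces $K_\iota$, $\iota\in\Irr\GG$, and faithfulness of $\bh_\XX$ together with finite-dimensionality of each $K_\iota$ allows any normal functional, after truncation to finitely many isotypic components, to be approximated in norm by combinations of functionals of the form $\bh_\XX(\,\cdot\,x)$; contractivity of $\omega\mapsto L_\omega$ then gives the required operator-norm approximation of $L_\omega$ by elements of $V$. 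Once density is in hand, the Hilbert \cst-module axioms (sesquilinearity, positivity of $\langle a,a\rangle = a^*a$ inside the \cst-algebra $\c0(\hh{\GG})$, and non-degeneracy from injectivity of $\omega\mapsto L_\omega$ noted immediately after the definition of $\c0(\hh{\XX})$) are automatic.
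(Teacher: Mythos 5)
Your overall structure is the intended one: the paper states this corollary without a separate proof precisely because it is meant to follow from \eqref{eq:prop2} and \eqref{EqInProd} by bilinearity, contractivity of $\omega\mapsto L_\omega$, and norm-closedness, exactly as you set things up. The first half ($ac\in\c0(\hh{\XX})$) is complete as written, and you correctly isolate the only substantive point in the second half: the norm-density of $V=\mathrm{span}\bigl\{L_{\bh_\XX(\,\cdot\,x)}\st x\in\Pol(\XX)\bigr\}$ in $\c0(\hh{\XX})$.

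However, the argument you sketch for that density would fail. ``Truncating a normal functional to finitely many isotypic components'' amounts to taking partial sums of a Fourier-type decomposition in $\Linf(\XX)_*$, and such partial sums need not converge in the predual norm: already for $\GG=\XX=\mathbb{T}$ the predual is $L^1(\mathbb{T})$ and partial Fourier sums of $L^1$-functions do not converge in $L^1$. The correct (and shorter) route uses the standing ergodicity assumption differently: ergodicity forces $\bh_\XX$ to be faithful (if $\bh_\XX(x)=0$ with $x\geq 0$ then $(\id\tens\bh_\GG)\alpha(x)=0$, hence $\alpha(x)=0$ by faithfulness of $\bh_\GG$, hence $x=0$ by injectivity of $\alpha$), so $\Omega=\Lambda_\XX(\I)$ is cyclic and separating for $\Linf(\XX)$. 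Consequently, approximating $\xi$ by $a'\Omega$ with $a'$ in the commutant and using $\omega_{a'\Omega,\eta}=\omega_{\Omega,(a')^*\eta}$ on $\Linf(\XX)$, one sees that the functionals $\omega_{\Omega,\zeta}$, $\zeta\in\Ltwo(\XX)$, are norm-dense in $\Linf(\XX)_*$; since $\bh_\XX(\,\cdot\,x)=\omega_{\Omega,\Lambda_\XX(x)}$ and $\Lambda_\XX(\Pol(\XX))$ is dense in $\Ltwo(\XX)$, the family $\bigl\{\bh_\XX(\,\cdot\,x)\st x\in\Pol(\XX)\bigr\}$ is norm-dense in $\Linf(\XX)_*$, and contractivity of $\omega\mapsto L_\omega$ gives density of $V$. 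With that replacement your proof is complete; the module axioms at the end are indeed automatic (note only that non-degeneracy $a^*a=0\Rightarrow a=0$ holds for any bounded operator and does not require the injectivity of $\omega\mapsto L_\omega$).
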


In the following, we will denote by $\Pol(\GG)^{\iota}$ the finite-dimensional space of matrix coefficients of $U^{\iota}$ and write
\[
\Pol(\XX)^{\iota}=\bigl\{(\id\tens\bh_{\GG}(S^{-1}(a)\,\cdot\,)\bigr)\alpha(x)\st{a}\in\Pol(\GG)^{\iota},\:x\in\Linf(\XX)\bigr\}
\]
for the corresponding \emph{spectral subspace}, which is finite-dimensional by \cite[Theorem 17]{boca}.

Recall that we have
\[
\c0(\hh{\GG})\cong\bigoplus_{\iota\in\Irr{\GG}}M_{n_\iota}(\CC).
\]

\begin{lemma} \label{lem:discreteness}
There is an isomorphism of $\c0(\hh{\GG})$-modules
\begin{equation}
\c0(\widehat{\XX})=\bigoplus_{\iota\in\Irr{\GG}}M_{m_\iota,n_\iota}(\CC) \label{c_0Xformula}
\end{equation}
for certain finite $m_\iota\in\NN$ (with possibly $m_\iota=0$) and the obvious Hilbert \cst-module structure.
\end{lemma}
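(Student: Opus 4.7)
The strategy is to decompose $\Ltwo(\XX)$ and $\Ltwo(\GG)$ along their spectral components for the relevant $\GG$-actions, to show that every operator $L_\omega$ is block-diagonal with respect to these decompositions, and finally to invoke the classification of finite-dimensional Hilbert modules over full matrix algebras to identify each block.

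Concretely, for each $\iota\in\Irr{\GG}$ I set $\sH_\iota^\GG:=\Lambda_\GG(\Pol(\GG)^\iota)\subset\Ltwo(\GG)$ and $\sH_\iota^\XX:=\Lambda_\XX(\Pol(\XX)^\iota)\subset\Ltwo(\XX)$, with $P_\iota^\GG$ and $P_\iota^\XX$ the corresponding orthogonal projections. By Peter--Weyl and \cite[Theorem 17]{boca} these are pairwise orthogonal finite-dimensional subspaces whose algebraic sums are dense; moreover $P_\iota^\GG$ coincides with the central minimal projection $p_\iota\in\c0(\hh{\GG})$ cutting out the summand $M_{n_\iota}(\CC)$.

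The central step is the block-diagonality claim: for every $\omega\in\Linf(\XX)_*$ and every $\iota\in\Irr{\GG}$,
\[
L_\omega^*(\sH_\iota^\XX)\subset\sH_\iota^\GG.
\]
This is immediate from the identity $L_\omega^*\Lambda_\XX(x)=\overline{\omega}(x_{(0)})\Lambda_\GG(x_{(1)})$ already derived inside the proof of \eqref{eq:prop2}, combined with the standard spectral invariance $\alpha(\Pol(\XX)^\iota)\subset\Pol(\XX)^\iota\atens\Pol(\GG)^\iota$. Taking adjoints gives $L_\omega(\sH_\iota^\GG)\subset\sH_\iota^\XX$, and hence $P_\kappa^\XX L_\omega P_\iota^\GG=0$ whenever $\kappa\neq\iota$. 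Passing to the norm closure one obtains
\[
\c0(\hh{\XX})=\bigoplus_{\iota\in\Irr{\GG}}\c0(\hh{\XX})_\iota,\qquad\c0(\hh{\XX})_\iota:=P_\iota^\XX\,\c0(\hh{\XX})\,P_\iota^\GG,
\]
as a $\c0$-direct sum.

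Each summand $\c0(\hh{\XX})_\iota$ lies in $\B(\sH_\iota^\GG,\sH_\iota^\XX)$, a finite-dimensional space, and is therefore itself finite-dimensional and automatically norm-closed. By the preceding corollary $\c0(\hh{\XX})$ is a right Hilbert module over $\c0(\hh{\GG})$, and the block decomposition above shows that $\c0(\hh{\XX})_\iota$ is a finite-dimensional right Hilbert module over $\c0(\hh{\GG})p_\iota\cong M_{n_\iota}(\CC)$. The elementary classification of finite-dimensional right Hilbert modules over a full matrix algebra then forces $\c0(\hh{\XX})_\iota\cong M_{m_\iota,n_\iota}(\CC)$ for a unique $m_\iota\in\NN$ (possibly zero, namely $m_\iota=\dim\c0(\hh{\XX})_\iota/n_\iota$), yielding \eqref{c_0Xformula}. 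The only genuine obstacle in this plan is verifying the spectral block-diagonality in the third paragraph; once that is in hand, the rest reduces to standard structure theory for Hilbert modules over matrix algebras.
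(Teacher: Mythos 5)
Your proof is correct and follows essentially the same route as the paper's: both arguments combine the structure theory of Hilbert \cst-modules over (direct sums of) matrix algebras with the key observation that every element of $\c0(\hh{\XX})$ maps $\Ltwo(\GG)^{\iota}$ into the finite-dimensional space $\Ltwo(\XX)^{\iota}$. The only difference is that you spell out the verification of this block-diagonality (via the formula for $L_\omega^*$ and the spectral invariance $\alpha(\Pol(\XX)^\iota)\subset\Pol(\XX)^\iota\atens\Pol(\GG)^\iota$), which the paper leaves implicit.
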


\begin{proof}
It is easy to see that any Hilbert C$^*$-module over $\bigoplus\Limits_{\iota\in\Irr{\GG}}M_{n_\iota}(\CC)$ must be of the form $\bigoplus\Limits_{\iota\in\Irr{\GG}}B(\sH^{\iota},\sK^{\iota})$ for certain Hilbert spaces $\sK^{\iota}$, with $\sH^{\iota}$ the Hilbert space underlying the representation $U^\iota$. Our aim is to show that the $\sK^{\iota}$ are finite-dimensional.

However, denoting by $\Ltwo(\GG)^{\iota}$ and $\Ltwo(\XX)^{\iota}$ the finite-dimensional Hilbert spaces obtained as the images of $\Pol(\GG)^{\iota}$ and $\Pol(\XX)^{\iota}$ under the GNS-construction, it follows that each $x\in\c0(\widehat{\XX})$ sends $\Ltwo(\GG)^{\iota}$ into $\Ltwo(\XX)^{\iota}$. Hence $B(\sH^{\iota},\sK^{\iota})$ and so also $\sK^{\iota}$ is finite-dimensional.
\end{proof}

Note that it follows from the above lemma that for each $\iota \in \Irr{\GG}$
\begin{equation}\label{EqSameComp}
M_{m_{\iota},n_{\iota}}(\CC)\cong\bigl\{L_{\bh_{\XX}(\,\cdot x)}\st{x}\in\Pol(\XX)^{\iota}\bigr\}.
\end{equation}

\begin{theorem}\label{mainthm}
Let $\alpha$ be an ergodic action of a compact quantum group $\GG$ on $\XX$. If $\Linf(\XX)=M_n(\CC)\oplus\sN$ then $\dim{\Linf(\XX)}<\infty$.
\end{theorem}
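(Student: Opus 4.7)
The plan is to reduce the problem to a classical ergodic-action statement on the centre of $\Linf(\XX)$ and then apply an elementary atom-counting argument.

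\textbf{Reduction to multiplicities.} For an ergodic coaction of a compact quantum group the invariant state $\bh_{\XX}$ is faithful, so the GNS map is injective and
$$\dim\Linf(\XX)\le\dim\Ltwo(\XX)=\sum_{\iota\in\Irr{\GG}}m_\iota n_\iota$$
by Lemma~\ref{lem:discreteness}. Since each $m_\iota$ is already known to be finite, the claim reduces to showing that only finitely many $\iota\in\Irr{\GG}$ satisfy $m_\iota>0$.

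\textbf{Passing to the centre.} Let $\cZ:=\cZ(\Linf(\XX))$ and let $p\in\cZ$ be the minimal central projection with $p\Linf(\XX)=M_n(\CC)$; thus $p$ is an atom of $\cZ$ with $\bh_{\XX}(p)>0$. I would argue that $\alpha$ preserves $\cZ$ and that the restriction $\alpha|_\cZ$ takes values in $\cZ\vtens\cZ(\Linf(\GG))$: that is, the induced coaction on the commutative algebra $\cZ$ factors through the maximal classical quotient of $\GG$, giving an honest ergodic action of a compact group on $\cZ$.

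\textbf{Classical conclusion.} For a compact group acting ergodically on a commutative von Neumann algebra, the existence of an atom forces finite-dimensionality: by Haar invariance all atoms share the same positive mass, so their number is at most $1/\bh_{\XX}(p)$. Hence $\cZ$ is finite-dimensional and $\Linf(\XX)=\bigoplus_{i=1}^{k}\sM_i$ is a finite direct sum of factors. The classical action on $\cZ$ permutes the atoms, and hence the factors $\sM_i$, transitively; slicing $\alpha$ by appropriate states on $\Linf(\GG)$ produces $*$-isomorphisms $\sM_i\cong\sM_1=M_n(\CC)$ for every $i$, so $\Linf(\XX)\cong M_n(\CC)^{k}$ is finite-dimensional.

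\textbf{Main obstacle.} The delicate step is the one on the centre: even granting that $\alpha$ preserves $\cZ$, it is not automatic that its second leg lies in the commutative subalgebra $\cZ(\Linf(\GG))$, which is what is needed to reduce to a classical compact-group action. This is a structural fact---ergodic coactions of compact quantum groups on commutative algebras are classical---and will need a careful argument (e.g.\ using coassociativity together with $\cZ(\Linf(\XX)\vtens\Linf(\GG))=\cZ\vtens\cZ(\Linf(\GG))$). A possible alternative, avoiding the reduction to the centre, is to exploit centrality of $p\tens\I$ directly: the isometry $G$ from the preamble then restricts (using that $p\tens\I$ commutes with $\alpha(y)$) to an isometry $p\Ltwo(\XX)\tens\Ltwo(\XX)\to p\Ltwo(\XX)\tens\Ltwo(\GG)$ compatible with the spectral decomposition, yielding the bound $m_\iota\le n_\iota$ for every $\iota$; unfortunately this per-representation bound alone does not rule out infinitely many non-zero $m_\iota$ and must be supplemented by the centre argument to conclude finite-dimensionality.
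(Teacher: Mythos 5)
Your opening reduction (finitely many non-zero multiplicities $m_\iota$ suffice, since $\bh_\XX$ is faithful and $\dim\Ltwo(\XX)=\sum_\iota m_\iota n_\iota$) is fine, but the step on which everything else rests --- passing to the centre --- fails for two independent reasons. First, a coaction of a compact quantum group does \emph{not} in general restrict to the centre: for $z\in\cZ\bigl(\Linf(\XX)\bigr)$ the element $\alpha(z)$ commutes with $\alpha\bigl(\Linf(\XX)\bigr)$, but there is no reason for it to lie in $\cZ\bigl(\Linf(\XX)\bigr)\vtens\Linf(\GG)$; the paper states this explicitly in the remark following Theorem \ref{conjfinite}, precisely to explain why Theorem \ref{mainthm} cannot be reduced to the commutative (all summands one-dimensional) case. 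The restriction to the centre is available only in special situations such as Theorem \ref{action}. Second, even granting a commutative algebra to act on, the ``structural fact'' you invoke --- that ergodic coactions of compact quantum groups on commutative algebras are classical, i.e.\ that the second leg of the coaction lies in $\cZ\bigl(\Linf(\GG)\bigr)$ --- is false: the defining action of the quantum permutation group $S_4^+$ on $\CC^4$ is ergodic, and its coefficients $u_{ij}$ generate the noncommutative algebra $\Pol(S_4^+)$, so they cannot all be central. Coassociativity together with $\cZ(\sM\vtens\sN)=\cZ(\sM)\vtens\cZ(\sN)$ will not rescue this; the obstruction is genuine.

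What does survive is the commutative endpoint of your argument: an ergodic action of a compact quantum group on $\linf(\NN)$ is indeed impossible, and the ``all atoms have equal invariant mass'' count can be run directly against the quantum action, with no reduction to a classical group (this is Theorem \ref{noact} and Remark \ref{hN}). But without the centre reduction this does not touch the case $\Linf(\XX)=M_n(\CC)\oplus\sN$. The paper's proof takes an entirely different route: it organizes the spectral subspaces into a Hilbert $\cst$-module $\c0(\hh{\XX})=\bigoplus_\iota M_{m_\iota,n_\iota}(\CC)$ over $\c0(\hh{\GG})$, uses the finite-dimensional representation $\eta\colon\Linf(\XX)\to M_n(\CC)$ to build $L_\eta\in M_n(\CC)\tens\c0(\hh{\XX})$, and shows that infinitely many non-zero $m_\iota$ would force $\bigl\|L_\eta(\I\tens p_\iota)\bigr\|\geq 1$ for infinitely many $\iota$, contradicting membership in a $c_0$-direct sum. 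Your alternative observation that $p\tens\I$ commutes with $\alpha\bigl(\Linf(\XX)\bigr)$ and yields $m_\iota\le n_\iota$ is correct but, as you acknowledge, a per-representation bound cannot exclude infinitely many non-zero multiplicities, so it does not close the gap.
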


\begin{proof}
Assume that $\Linf(\XX)$ is not of finite dimension, but
\[
\Linf(\XX)=M_n(\CC)\oplus\sN.
\]
We will arrive at a contradiction.

Denote by $\eta$ the canonical unital normal $*$-homomorphism $\eta:\Linf(\XX)\to{M_n(\CC)}$ obtained by projecting onto the first component. Consider the unital normal $*$-homomorphism $\alpha_{\eta}:\Linf(\XX)\to{M_n(\CC)}\tens\Linf(\GG)$ defined by
\[
x\longmapsto{x_\eta}=(\eta\tens\id)\alpha(x).
\]
Then for $x\in\Linf(\XX)$ define $\{x\}_{i,j=1,\dotsc,n}$ in $\Linf(\GG)$ by the equality
\[
x_\eta=\sum_{i,j=1}^ne_{ij}\tens x_{ij},
\]
where $\{e_{ij}\}_{i,j=1,\dotsc,n}$ are the matrix units of $M_n(\CC)$. Consider now the convolution operator $L_{(\id\tens\bh_{\GG})(\,\cdot\,x_{\eta})}:\CC^n\tens\Ltwo(\GG)\to\CC^n\tens\Ltwo(\GG)$ defined by
\[
L_{(\id\tens\bh_{\GG})(\,\cdot\,x_{\eta})}
(\xi\tens\zeta)=\sum_{i,j=1}^n e_{ij}\xi\tens L_{\bh_{\GG}(\,\cdot\,x_{ij})}\zeta,\qqquad\xi\in\CC^n,\;\zeta\in\Ltwo(\GG).
\]

We claim that
\begin{equation}\label{EqNorm}
\bigl\|L_{(\id\tens \bh_{\GG})(\,\cdot\,x_{\eta})}\bigr\|=\bigl\|L_{\bh_{\XX}(\,\cdot\, x)}\bigr\|.
\end{equation}
Indeed, we compute (using \eqref{EqInProd})
\[
\begin{split}
L_{(\id\tens\bh_{\GG})(\,\cdot\,y_{\eta})}^*L_{(\id\tens\bh_{\GG})(\,\cdot\,x_{\eta})}
&=\sum_{i,j,k,l=1}^n e_{ji}e_{kl}\tens{L_{\bh_{\GG}(\,\cdot\,y_{ij})}^*}L_{\bh_{\GG}(\,\cdot\,x_{kl})}\\
&=\sum_{j,l,i=1}^n e_{jl}\tens\bh_{\GG}(y_{ij}^*x_{il}{}_{(1)})L_{\bh_{\GG}(\,\cdot\,x_{il}{}_{(2)})}.
\end{split}
\]
However, we have
\[
\begin{split}
(\id\tens\bh_{\GG}\tens\id)\biggl(\sum_{j,l,i=1}^n e_{jl}\tens{y_{ij}^*}x_{il}{}_{(1)}\tens{x_{il}}{}_{(2)}\biggr)
&=\sum_{i,j,k,l=1}^n e_{ji}e_{kl}\tens\bh_{\GG}(y_{ij}^*x_{kl}{}_{(1)})\tens{x_{kl}{}_{(2)}}\\
&=(\id\tens\bh_{\GG}\tens\id)\bigl((\alpha_{\eta}(y)^*\tens\I)(\id\tens\Delta_\GG)\alpha_{\eta}(x)\bigr)\\
&=(\eta\tens\bh_{\GG}\tens\id)(\alpha\tens\id)\bigl((y^*\tens\I)\alpha(x)\bigr)\\
&=\I\tens(\bh_{\XX}\tens\id)\bigl((y^*\tens\I)\alpha(x)\bigr)=\I\tens\bh_{\XX}(y^*x_{(0)})x_{(1)}.
\end{split}
\]
Hence
\[
L_{(\id\tens\bh_{\GG})(\,\cdot\, y_{\eta})}^*L_{(\id\tens\bh_{\GG})(\,\cdot\,x_{\eta})}
=\I\tens\bh_{\XX}(y^*x_{(0)})L_{\bh_{\GG}(\,\cdot\,x_{(1)})}=\I\tens{L_{\bh_{\XX}(\,\cdot\,y)}^*}L_{\bh_{\XX}(\,\cdot\,x)}
\]
by the identity \eqref{EqInProd}. In particular, \eqref{EqNorm} holds.

We are now ready to obtain our contradiction. Consider the normal linear functionals $\eta_{ij}\in\Linf(\XX)_*$ forming the matrix coefficients of $\eta$,
\[
\eta(x)=\sum_{i,j=1}^n \eta_{ij}(x)e_{ij},
\]
and note that $\overline{\eta_{ij}}=\eta_{ji}$. Then let
\begin{equation}
L_{\eta}=\sum_{i,j=1}^n e_{ij}\tens{L_{\eta_{ij}}}\in{M_n(\CC)}\tens\c0(\widehat{\XX}),
\label{Leta}\end{equation}
which is an operator from $\CC^n\tens\Ltwo(\GG)$ to $\CC^n\tens\Ltwo(\XX)$. Using \eqref{EqInProd}, we find that for $x\in\Pol(\XX)$, $v\in\CC^n$ and $a\in\Pol(\GG)$ we have
\[
\begin{split}
L_{\eta}^*(\id\tens{L_{\bh_{\XX}(\,\cdot\,x)}})\bigl(v\tens\Lambda_{\GG}(a)\bigr)
&=\sum_{i,j=1}^n e_{ji}v\tens{L_{\eta_{ij}}^*}L_{\bh_{\XX}(\,\cdot\,x)}\Lambda_{\GG}(a)\\
&=\sum_{i,j=1}^n e_{ji}v\tens{L_{\eta_{ij}}^*}\bh_{\GG}\bigl(S(x_{(1)})a\bigr)\Lambda_{\XX}(x_{(0)})\\
&=\sum_{i,j=1}^n e_{ji}v\tens\bh_{\GG}\bigl(S(x_{(2)})a\bigr)\eta_{ji}(x_{(0)})\Lambda_{\GG}(x_{(1)})\\
&=\eta(x_{(0)})v\tens\bh_{\GG}\bigl(S(x_{(2)})a\bigr)\Lambda_{\GG}(x_{(1)})\\
&=\bigl(\eta(x_{(0)})\tens L_{\bh_{\GG}(\,\cdot\,x_{(1)})}\bigr)\bigl(v\tens\Lambda_{\GG}(a)\bigr)\\
&=L_{(\id\tens \bh_{\GG})(\,\cdot\,x_{\eta})}\bigl(v\tens\Lambda_{\GG}(a)\bigr).
\end{split}
\]

However, as $\dim{\Linf(\XX)}=\infty$, we can find by \eqref{EqSameComp} an infinite collection of mutually inequivalent irreducible representations $\{\iota\}$ of $\GG$ with corresponding central projections $p_\iota\in\c0(\hh{\GG})$ and $0\neq{x_\iota}\in\Pol(\XX)^{\iota}$ such that
\[
L_{\bh_{\XX}(\,\cdot\,x_\iota)}p_\iota=L_{\bh_{\XX}(\,\cdot\,x_\iota)}.
\]
Since
\[
\alpha(\Pol(\XX)^{\iota})\subset\Pol(\XX)^{\iota}\tens\Pol(\GG)^{\iota},
\]
we find that
\[
\begin{split}
\bigl\|L_{\bh_{\XX}(\,\cdot\,x_\iota)})\bigr\|
&=\bigl\|L_{(\id\tens\bh_{\GG})(\,\cdot\,(x_\iota)_{\eta})}\bigr\|\\
&=\bigl\|(\I\tens{p_\iota})L_{\eta}^*(\id\tens{L_{\bh_{\XX}(\,\cdot x_\iota)}})\bigr\|\\
&\leq\bigl\|L_{\eta}(\I\tens{p_\iota})\bigr\|\bigl\|L_{\bh_{\XX}(\,\cdot\,x_\iota)}\bigr\|.
\end{split}
\]
It follows that $\bigl\|L_{\eta}(\I\tens{p_\iota})\bigr\|\geq{1}$ for infinitely many $\iota$, which is a contradiction with \eqref{c_0Xformula} and \eqref{Leta}.
\end{proof}

\section{Actions of compact quantum groups on direct sums}\label{Adam}

In this section we consider an action of a compact quantum group on a direct sum of von Neumann algebras over a certain index set $\Ind$. We study a resulting relation on $\Ind$, classically corresponding to the relation of being in the same orbit of the action.

\subsection{Actions on direct sums of von Neumann algebras}
In the first subsection we consider the most general setup.
Let $\GG$ be a compact quantum group, and let $\sM$ be a von Neumann algebra of the following form:
\[
\sM=\prod_{i\in\Ind}\sM_i,
\]
where $\Ind$ is an index set and for each $i\in\Ind$ we have a von Neumann algebra $\sM_i$. We view each $\sM_i$ as a (non-unital) subalgebra of $\sM$ and write $p_i$ for the image of $\I_{\sM_i}$ in $\sM$ and $\bp_i$ for the canonical map $\sM\to\sM_i$. Elements of $\sM$ are norm bounded families $(x_i)_{i\in\Ind}$ with each $x_i\in\sM_i$. Given $x\in\sM$ we have $x=(x_i)_{i\in\Ind}$ where $x_i=\bp_i(x)$ for each $i\in\Ind$. Let $\alpha:\sM\to\sM\vtens\Linf(\GG)$ be an action of $\GG$ on $\sM$.

\begin{definition}\label{Def:Relation}
Let $i,j\in\Ind$. We say that $i$ is \emph{$\alpha$-related} to $j$ (which we write $i\Ralpha{j}$) if there exists $x\in{\sM_i}$ such that
\[
(\bp_j\tens\id)\alpha(x)\neq{0}.
\]
\end{definition}

Define for each $i,j\in\Ind$ the (usually non-unital) $*$-homomorphism $\alpha_{ij}:\sM_i\to\sM_j\vtens\Linf(\GG)$,
\[
\alpha_{ji}(x)=(\bp_j\tens\id)\alpha(x),\qqquad{x}\in\sM_i.
\]
Equation \eqref{actionEquation} now takes the form
\begin{equation}\label{action1}
\sum_{k\in\Ind}(\alpha_{jk}\tens\id)\bigl(\alpha_{ki}(x)\bigr)=(\id\tens\Delta_\GG)\bigl(\alpha_{ji}(x)\bigr),\qqquad{i},j\in\Ind,\;x\in\sM_i.
\end{equation}
Further say that $\alpha$ is \emph{implemented} by a unitary $U\in\B(\sH\tens\Ltwo(\GG))$, where $\sH$ is a Hilbert space, if there exists a (unital, normal) faithful representation $\pi:\sM\to\B(\sH)$ for which
\begin{equation}\label{implem}
(\pi\tens\id)\bigl(\alpha(y)\bigr)=U\bigl(\pi(y)\tens\I\bigr)U^*,\qqquad{y}\in\sM.
\end{equation}

The form of $\sM$ implies that in fact $\sH=\bigoplus\Limits_{i\in\Ind}\sH_i$ and $\pi(x)=\bigoplus\Limits_{i\in\Ind}\pi_i(x_i)$, where for each $i \in \Ind$ the map $\pi_i:\sM_i\to\B(\sH_i)$ is a unital representation of $\sM_i$. Thus we can in fact write the implementing unitary $U$ as a matrix $\bigl(U_{ij}\bigr)_{i,j\in\Ind}$, where $U_{ij}=\bigl(\pi(p_i)\tens\I\bigr)U\bigl(\pi(p_j)\tens\I\bigr)\in\B(\sH_j\tens\Ltwo(\GG),\sH_i\tens\Ltwo(\GG))$. In this picture equation \eqref{implem} can be written as
\begin{equation}\label{implem2}
(\pi_j\tens\id)\bigl(\alpha_{ji}(x)\bigr)=U_{ji}\bigl(\pi_i(x)\tens\I\bigr)U_{ji}^*,\qqquad{i},j\in\Ind,\;x\in\sM_i.
\end{equation}

An action $\alpha$ of $\GG$ on $\sM$ can always be implemented. One possible such implementation could be defined by choosing a faithful representation $\pi_0$ of $\sM$ on a Hilbert space $\sH_0$ and defining $\sH=\sH_0\tens\Ltwo(\GG)$, $\pi=(\pi_0\tens\id)\comp\alpha$ and $U=\ww^\GG_{23}\in\B(\sH_0)\vtens\linf(\hh{\GG})\vtens\Linf(\GG)\subset\B(\sH)\vtens\Linf(\GG)$. There are many other constructions yielding a unitary implementation (see e.g.~\cite{boca,Vaes-implementation}).

\begin{lemma}\label{lem:easy}
Let $i,j\in\Ind$. Then $i\Ralpha{j}$ if and only if $\alpha_{ji}\neq{0}$, if and only if $\alpha_{ji}(\I_{\sM_i})\neq{0}$. Moreover if $\alpha$ is implemented by a unitary $U$ as above the above conditions are equivalent to the fact that $U_{ji}\neq{0}$.
\end{lemma}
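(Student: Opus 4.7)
The plan is to chain together three elementary equivalences, using only the fact that $\alpha$ is a (possibly non-unital, when restricted to a summand) $*$-homomorphism together with the implementation identity \eqref{implem2}.

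First I would dispose of the equivalence $i\Ralpha j \Leftrightarrow \alpha_{ji}\neq 0$, which is essentially a rephrasing of Definition~\ref{Def:Relation}: by definition $\alpha_{ji}(x)=(\bp_j\tens\id)\alpha(x)$, so the existence of $x\in\sM_i$ with $(\bp_j\tens\id)\alpha(x)\neq 0$ is exactly the non-vanishing of $\alpha_{ji}$.

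Next I would prove $\alpha_{ji}\neq 0 \Leftrightarrow \alpha_{ji}(\I_{\sM_i})\neq 0$. The non-trivial direction is ``$\Rightarrow$''. Since $\alpha_{ji}$ is a (possibly non-unital) $*$-homomorphism, $q:=\alpha_{ji}(\I_{\sM_i})$ is a projection in $\sM_j\vtens\Linf(\GG)$, and for every $x\in\sM_i$ one has
\[
\alpha_{ji}(x)=\alpha_{ji}(\I_{\sM_i}x\I_{\sM_i})=q\,\alpha_{ji}(x)\,q.
\]
Thus if $q=0$ then $\alpha_{ji}\equiv 0$, which gives the contrapositive.

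For the final equivalence, assume $\alpha$ is implemented by $U$ as in \eqref{implem} and note that $\pi=\bigoplus_{i\in\Ind}\pi_i$ is faithful and each $\pi_i$ is the restriction of $\pi$ to $\sM_i\subset\sM$, hence also faithful. Specialising \eqref{implem2} to $x=\I_{\sM_i}$ yields
\[
(\pi_j\tens\id)\bigl(\alpha_{ji}(\I_{\sM_i})\bigr)=U_{ji}U_{ji}^*.
\]
If $U_{ji}\neq 0$ then the right-hand side is non-zero, so $\alpha_{ji}(\I_{\sM_i})\neq 0$, hence $\alpha_{ji}\neq 0$. Conversely, if $U_{ji}=0$ then \eqref{implem2} gives $(\pi_j\tens\id)(\alpha_{ji}(x))=0$ for every $x\in\sM_i$, and faithfulness of $\pi_j$ (together with faithfulness of the identity on $\Linf(\GG)$) forces $\alpha_{ji}=0$.

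There is no real obstacle here; the only point that demands a moment of care is that $\alpha_{ji}$ is in general non-unital, so one cannot simply invoke injectivity of $*$-homomorphisms between unital $C^*$-algebras: one has to use the cut-down identity $\alpha_{ji}(x)=q\alpha_{ji}(x)q$ with $q=\alpha_{ji}(\I_{\sM_i})$ to reduce everything to the behaviour on the unit.
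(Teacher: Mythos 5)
Your proof is correct and follows the same route as the paper: the paper declares the first two equivalences obvious and settles the third via the identity $(\pi_j\tens\id)\bigl(\alpha_{ji}(\I_{\sM_i})\bigr)=U_{ji}U_{ji}^*$ from \eqref{implem2}, exactly as you do. The only difference is that you spell out the "obvious" steps (in particular the cut-down identity $\alpha_{ji}(x)=q\,\alpha_{ji}(x)\,q$), which is a fine elaboration rather than a new argument.
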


\begin{proof}
The first two equivalences are obvious. So is the third one, once we note that by \eqref{implem2} we have (for $i,j\in\Ind$)
\[
(\pi_j\tens\id)\bigl(\alpha_{ji}(\I_{\sM_i})\bigr)=U_{ji}U_{ji}^*.
\]
\end{proof}

\begin{proposition}\label{sym}
The relation $\Ralpha$ is symmetric.
\end{proposition}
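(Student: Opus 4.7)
Plan:

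The plan is to use Lemma \ref{lem:easy} to reformulate the problem in terms of the blocks of a unitary implementation $U$ of $\alpha$, and then to exploit the corepresentation structure available on a specific implementation. By Lemma \ref{lem:easy} the relation $i \Ralpha j$ is equivalent to $U_{ji} \neq 0$ for a chosen implementing unitary $U$, and the problem becomes showing $U_{ji} \neq 0 \iff U_{ij} \neq 0$. Since this symmetry is not automatic for arbitrary unitaries, I would fix the canonical implementation: take $\pi_0 : \sM \to \B(\sH_0)$ any faithful normal representation, set $\sH = \sH_0 \tens \Ltwo(\GG)$, $\pi = (\pi_0 \tens \id)\comp\alpha$, and $U = \ww^\GG_{23}$. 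For this choice, $U$ is a unitary corepresentation of $\GG$, i.e. it satisfies $(\id \tens \Delta_\GG)(U) = U_{12} U_{13}$.

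The decisive structural input is then the antipodal symmetry of a corepresentation. For a unitary corepresentation of $\GG$, the Hopf-algebraic antipode $S$ of $\Pol(\GG)$ satisfies $(\id \tens S)(U) = U^{*}$ on the $*$-subalgebra of matrix coefficients in $\Pol(\GG)$. Writing $P_i = \pi(p_i)$ and applying this identity block-wise yields
\[
(\id \tens S)(U_{ji}) = (P_j \tens \I)\,(\id \tens S)(U)\,(P_i \tens \I) = (P_j \tens \I)\, U^{*}\,(P_i \tens \I) = (U_{ij})^{*}.
\]
Since $S$ is injective as an anti-isomorphism of $\Pol(\GG)$, this identity delivers $U_{ji} \neq 0 \iff (U_{ij})^{*} \neq 0 \iff U_{ij} \neq 0$, which by Lemma \ref{lem:easy} is exactly $i \Ralpha j \iff j \Ralpha i$.

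The main technical obstacle is that $S$ is unbounded whenever $\GG$ is not of Kac type, so the identity $(\id \tens S)(U) = U^{*}$ is \emph{a priori} only defined on the dense $*$-subalgebra generated by matrix coefficients of $\ww^\GG$ belonging to $\Pol(\GG)$. This is dealt with by observing that the non-vanishing of $U_{ji}$ can be detected by slicing on this dense Pol-subspace, to which $S$ applies directly; alternatively one can bypass the unbounded $S$ via the bounded unitary antipode $R^\GG$ and the factorisation $S = \tau_{-i/2} \comp R^\GG$, using that the scaling group $\tau_t$ is an automorphism group and so preserves non-vanishing.
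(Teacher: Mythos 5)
Your proposal is correct and takes essentially the same route as the paper: reduce via Lemma \ref{lem:easy} to the non-vanishing of the blocks of an implementing unitary that is a representation of $\GG$, then use the antipode identity $(\id\tens S)(U)=U^*$ together with injectivity of $S$ (the paper applies it in sliced form $(\omega_{\xi,\eta}\tens\id)(U^*)=S\bigl((\omega_{\xi,\eta}\tens\id)(U)\bigr)$, citing precisely the fact that coefficients of a unitary representation lie in the domain of $S$, which resolves the unboundedness issue you flag).
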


\begin{proof}
We may and do assume that $\alpha$ is implemented by a unitary $U$ which is a representation of $\GG$. If $i\Ralpha{j}$, we have by Lemma \ref{lem:easy} that $U_{ji}\neq{0}$. Equivalently, there exist $\xi\in\sH_j\subset\sH$ and $\eta\in\sH_i\subset\sH$ such that
\[
(\omega_{\xi,\eta}\tens\id)(U)\neq{0}.
\]
Note however that $(\omega_{\xi,\eta}\tens\id)(U)$ belongs to $\Linf(\GG)$, and moreover, by \cite[Theorem 1.6]{mu}, it is in the domain of the antipode of $\GG$ and
\[
(\omega_{\xi,\eta}\tens\id)(U^*)=S\bigl((\omega_{\xi,\eta}\tens\id)(U)\bigr)\neq{0},
\]
where we use the fact that $S$ is injective. This means that $U_{ij}^*\neq{0}$, so $U_{ij}\neq{0}$ and by Lemma \ref{lem:easy} we see that $j\Ralpha{i}$.
\end{proof}

\subsection{Actions on direct sums of factors}

Easy classical examples (take $\GG=\mathbb{Z}_2$ acting on the set $X=\{1,2,3,4\}$ by flipping $1$ with $2$ and $3$ with $4$ and write $\sM=\Linf(X)$ as $\Linf(\{1\})\oplus\Linf(\{2,3\})\oplus\Linf(\{4\})$) show that in this generality our relation need not be transitive. The lack of transitivity however cannot happen as soon as the ``components'' $\sM_i$ are ``indecomposable''.

\begin{theorem}\label{thm:equiv}
Assume that each $\sM_i$ is a factor. Then the relation $\Ralpha$ is an equivalence relation. Furthermore, for each equivalence class $A$ of the relation $\Ralpha$ the action $\alpha$ restricts in an obvious way to an action on $\prod\Limits_{i\in{A}}\sM_i$. In particular, the projection $p_A=\sum\Limits_{j\in{A}}p_j$ is invariant under $\alpha$, i.e.~$\alpha(p_A)=p_A\tens\I$.
\end{theorem}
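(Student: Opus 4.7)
The plan is to establish reflexivity, transitivity, and the invariance statement $\alpha(p_A)=p_A\tens\I$ in turn; symmetry is already Proposition~\ref{sym}, and reflexivity will follow formally from symmetry and transitivity together with a short ``existence'' observation. The main obstacle is transitivity, which I would derive from the action identity~\eqref{action1} evaluated at the unit of $\sM_i$, combined with positivity of projections and the factor assumption on the $\sM_j$.

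\medskip

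\emph{Transitivity.} For each pair $i,j\in\Ind$ put $P_{ji}:=\alpha_{ji}(\I_{\sM_i})\in\sM_j\vtens\Linf(\GG)$. Since $\alpha_{ji}$ is a $*$-homomorphism, each $P_{ji}$ is a projection, and by Lemma~\ref{lem:easy} the condition $i\Ralpha j$ is equivalent to $P_{ji}\neq 0$. Specialising~\eqref{action1} to $x=\I_{\sM_i}$ gives
\[
(\id\tens\Delta_\GG)(P_{ki})=\sum_{l\in\Ind}(\alpha_{kl}\tens\id)(P_{li}),
\]
with $\sigma$-weak convergence of the sum. Assume $i\Ralpha j$ and $j\Ralpha k$, but, for contradiction, $i\not\Ralpha k$; then $P_{ki}=0$, so the left-hand side vanishes. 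Each summand on the right is the image of a projection under a $*$-homomorphism, hence a positive operator, and a vanishing sum of positive operators forces every term to equal zero. In particular $(\alpha_{kj}\tens\id)(P_{ji})=0$. But $\alpha_{kj}\neq 0$ because $j\Ralpha k$, and since $\sM_j$ is a factor its $\sigma$-weakly closed two-sided ideals are trivial, so the normal $*$-homomorphism $\alpha_{kj}$—and hence also $\alpha_{kj}\tens\id$—is injective. This contradicts $P_{ji}\neq 0$.

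\medskip

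\emph{Reflexivity and invariance.} For every $j$, normality of $\alpha$ and $\bp_j$ yields $\sum_{i}P_{ji}=(\bp_j\tens\id)\alpha(\I)=\I_{\sM_j}\tens\I\neq 0$, so some $P_{ji}$ is nonzero, i.e.\ $i\Ralpha j$ for some $i$. Symmetry gives $j\Ralpha i$, and transitivity applied to the chain $j\Ralpha i\Ralpha j$ delivers $j\Ralpha j$. For the invariance, fix an equivalence class $A$ of $\Ralpha$ and $k\notin A$. For any $j\in A$ we have $j\not\Ralpha k$ (otherwise transitivity would place $k$ in $A$), so $P_{kj}=0$; consequently $(\bp_k\tens\id)\alpha(p_A)=\sum_{j\in A}P_{kj}=0$, which means $\alpha(p_A)\leq p_A\tens\I$. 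Then $\sum_A\bigl(p_A\tens\I-\alpha(p_A)\bigr)$ is a sum of positive operators equal to $\I-\I=0$, so each summand vanishes and $\alpha(p_A)=p_A\tens\I$. The restriction of $\alpha$ to $p_A\sM=\prod\Limits_{i\in A}\sM_i$ is then the desired action of $\GG$.
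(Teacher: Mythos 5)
Your proof is correct and follows essentially the same route as the paper: transitivity comes from evaluating \eqref{action1} at $\I_{\sM_i}$, using positivity of the summands and the fact that a nonzero normal $*$-homomorphism out of a factor is injective, and reflexivity then follows from symmetry plus the existence of some related index. The only cosmetic differences are that you argue transitivity by contradiction rather than directly, and you derive the existence step from unitality of $\alpha$ where the paper invokes its injectivity.
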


\begin{proof}
Factoriality and normality of the maps involved imply that for any $i,l\in\Ind$ the statement $i\Ralpha{l}$ is equivalent to $\alpha_{il}$ being injective. Thus if further $j\in\Ind$ and $l\Ralpha{j}$, then considering equation \eqref{action1} we see that the projection $(\id\tens\Delta_\GG)\bigl(\alpha_{ji}(\I_{\sM_i})\bigr)$ is equal to the sum of projections $\sum\Limits_{k\in\Ind}(\alpha_{jk}\tens\id)\bigl(\alpha_{ki}(\I_{\sM_i})\bigr)$, dominating the non-zero projection $(\alpha_{jl}\tens\id)\bigl(\alpha_{li}(\I_{\sM_i})\bigr)$ (as by the same token $\alpha_{jl}$ is injective). This shows that $\alpha_{ji}(\I_{\sM_i})\neq{0}$ and consequently that $\Ralpha$ is transitive.

Finally note that if $i\in\Ind$ then there must be $j\in\Ind$ such that $i\Ralpha{j}$ -- otherwise $\alpha$ would fail to be injective. This fact together with symmetry established in Proposition \ref{sym} and transitivity shown above implies that $i\Ralpha{i}$ and the proof of the first part is complete.

The second statement is clear.

The last statement follows from the fact that $p_A$ is the unit of the algebra $\prod\Limits_{i\in{A}}\sM_i$.
\end{proof}

We hence from now on assume that each $\sM_i$ is a factor.

\begin{lemma}\label{lem:ergodic}
Suppose that $\alpha$ is ergodic. Then $i\Ralpha{j}$ for any $i,j\in\Ind$.
\end{lemma}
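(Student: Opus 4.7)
The plan is to invoke Theorem \ref{thm:equiv} to reduce the statement to a one-line consequence of ergodicity. The key observation provided by that theorem is that for every equivalence class $A$ of $\Ralpha$, the projection $p_A = \sum_{j\in A} p_j$ is $\alpha$-invariant, i.e.\ lies in the fixed point algebra $\sM^\alpha$.

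First, I would fix an arbitrary equivalence class $A$ of $\Ralpha$ (such a class exists since by the last part of the proof of Theorem \ref{thm:equiv} every index $i \in \Ind$ is related to itself, so $\Ind$ is a disjoint union of equivalence classes). By Theorem \ref{thm:equiv}, we have $\alpha(p_A) = p_A \tens \I$, so $p_A \in \sM^\alpha$. Ergodicity of $\alpha$ forces $\sM^\alpha = \CC\I$, and since $p_A$ is a projection in a one-dimensional algebra, either $p_A = 0$ or $p_A = \I$. The class $A$ is non-empty, so at least one $p_j$ with $j \in A$ is a non-zero projection in $\sM$, whence $p_A \neq 0$, and therefore $p_A = \I$.

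Unravelling, this means $\sum_{j\in A} p_j = \I_\sM$, so $A = \Ind$. Thus $\Ralpha$ has a single equivalence class, and so $i \Ralpha j$ for all $i, j \in \Ind$. There is really no serious obstacle here: the whole content of the argument is that an $\alpha$-invariant central projection of $\sM$ with non-trivial support must be $\I$, and the heavy lifting (namely that the orbit projections are $\alpha$-invariant) has already been carried out in Theorem \ref{thm:equiv}.
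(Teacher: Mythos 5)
Your argument is correct and is exactly the paper's: the paper's proof is the one-line observation that $p_A=\I$ for each equivalence class $A$, which follows from Theorem \ref{thm:equiv} together with ergodicity just as you spell out. You have merely expanded the same reasoning (invariance of $p_A$, hence $p_A\in\sM^\alpha=\CC\I$, hence $p_A=\I$ since $A\neq\emptyset$, hence $A=\Ind$).
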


\begin{proof}
We note that $p_A=\I$ for each equivalence class $A$ as in Theorem \ref{thm:equiv}.
\end{proof}

It is natural to expect that compact quantum groups cannot act on a direct sum of factors in such a way that the equivalence relation introduced above admits an infinite class. The result in full generality remains beyond our reach, but using Theorem \ref{mainthm} we can establish it for a direct sum of matrix algebras.

\begin{theorem}\label{conjfinite}
There is no action $\alpha$ of a compact quantum group $\GG$ on $\sM=\prod\Limits_{i\in\Ind}M_{n_i}(\CC)$ such that the relation $\Ralpha$ admits an infinite orbit.
\end{theorem}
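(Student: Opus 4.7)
The plan is to reduce the statement to Theorem \ref{mainthm} by exhibiting inside $\sM$ an $\alpha$-invariant projection $e$ such that the restricted action on $e\sM e$ is ergodic while $e\sM e$ is an infinite direct product of matrix algebras.

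First, by Theorem \ref{thm:equiv}, the projection $p_A$ onto the supposed infinite orbit $A$ is $\alpha$-invariant, so after replacing $\sM$ by $p_A\sM \cong \prod_{i\in A}M_{n_i}(\CC)$ we may assume that $\Ralpha$ has $A$ as its single orbit. The fixed point algebra $\sM^\alpha$ is a $\sigma$-weakly closed $*$-subalgebra of a product of matrix algebras, so the remark in Section \ref{intro} gives $\sM^\alpha \cong \prod_{k}M_{m_k}(\CC)$; pick a minimal projection $e \in \sM^\alpha$ and set $e_i = \bp_i(e)$. Since $\alpha(e) = e\tens\I$, the corner $e\sM e$ is $\alpha$-invariant, and minimality of $e$ in $\sM^\alpha$ gives $(e\sM e)^\alpha = e\sM^\alpha e = \CC e$, so the restricted action is ergodic.

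The heart of the argument is to show that $e_i \neq 0$ for every $i \in A$. Viewing $e_i$ as the element $p_i e \in \sM$ and using the centrality of $p_i$ together with $\alpha(e) = e\tens\I$, we have $\alpha(e_i) = \alpha(p_i)(e\tens\I)$. Decomposing $\alpha(p_i) = \sum_k \alpha_{ki}(\I_{\sM_i})$ with summands in $\sM_k\vtens\Linf(\GG)$ and $e\tens\I = \sum_l e_l \tens \I$, the mixed terms $\alpha_{ki}(\I_{\sM_i})(e_l\tens\I)$ with $l \neq k$ drop out because $p_k p_l = 0$; projecting to the $j$-th block then yields the key identity
\[
\alpha_{ji}(e_i) = \alpha_{ji}(\I_{\sM_i})(e_j\tens\I).
\]
If some $e_j$ were zero, this would force $\alpha_{ji}(e_i) = 0$ for every $i \in A$; but factoriality of $\sM_i$ makes $\alpha_{ji}$ either zero or injective, and the single-orbit hypothesis $i\Ralpha j$ excludes the former. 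Injectivity would then give $e_i = 0$ for every $i$, contradicting $e \neq 0$.

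Consequently $e\sM e = \prod_{i \in A} M_{\mathrm{rank}(e_i)}(\CC)$ is infinite-dimensional and contains $M_{\mathrm{rank}(e_{i_0})}(\CC)$ as a direct summand for any $i_0 \in A$. Theorem \ref{mainthm} applied to the ergodic action on $e\sM e$ then forces $e\sM e$ to be finite-dimensional --- a contradiction. I expect the only delicate point to be the derivation of the identity for $\alpha_{ji}(e_i)$; once it is available the rest is bookkeeping.
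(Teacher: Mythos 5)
Your proposal is correct and follows essentially the same route as the paper: reduce to a single orbit via Theorem \ref{thm:equiv}, take a minimal projection $e$ in the fixed-point algebra, observe that the corner $e\sM e$ carries an ergodic action, show (using injectivity of the $\alpha_{ji}$ on the orbit) that $e$ meets every block, and conclude by Theorem \ref{mainthm}. The identity $\alpha_{ji}(e_i)=\alpha_{ji}(\I_{\sM_i})(e_j\tens\I)$ you single out is exactly the paper's inequality $\alpha_{ji}(p_ip)\leq\bp_j(p)\tens\I$ in a slightly more explicit form, so there is no genuine divergence.
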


\begin{proof}
By the last statement in Theorem \ref{thm:equiv} we can assume that the equivalence relation $\Ralpha$ has only one class.

Let $\alpha:\sM\to\sM\vtens\Linf(\GG)$ be an action as above. Consider a minimal non-zero projection $p\in\sM^\alpha$ (a minimal projection exists because $\sM^\alpha$ is a type $\mathrm{I}$ von Neumann algebra). Then $\alpha$ restricts to an ergodic action on $p\sM{p}$ (this follows from minimality of $p$). In particular $p\sM{p}$, which is again of the form $\prod\Limits_{j\in\Jnd}M_{k_j}(\CC)$, is finite-dimensional by Theorem \ref{mainthm}. In other words, $\Ind_p={\bigl\{i\in\Ind\st{p_ip}\neq{0}\bigr\}}$ is finite. Consider then $i\in\Ind_p$. Assume that there exists $j\in\Ind\setminus\Ind_p$. Then
\[
\alpha_{ji}(p_ip)=(\bp_j\tens\id)\bigl(\alpha(p_i p)\bigr)\leq(\bp_j\tens\id)\bigl(\alpha(p)\bigr)=\bp_j(p)\tens\I=0.
\]
As $i\Ralpha{j}$ by assumption, we obtain by simplicity of $M_{n_i}(\CC)$ and Lemma \ref{lem:easy} the contradiction $p_ip=0$. Hence $\Ind=\Ind_p$ is finite.
\end{proof}

Note that if a classical group acts on a von Neumann algebra $\sM$ then the action restricts to the center $\cZ(\sM)$. This is no longer true for quantum groups, and thus the last theorem cannot be reduced to the case when all summands of the direct sum are one-dimensional (but cf.~Theorem \ref{action}).

\begin{proposition}
There is no action of a compact quantum group of Kac type on a direct sum of a finite and infinite factor which has only one orbit.
\end{proposition}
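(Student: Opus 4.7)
The strategy is to construct an $\alpha$-invariant normal tracial state $\phi$ on $\sM=\sM_1\oplus\sM_2$ (with $\sM_1$ the finite factor and $\sM_2$ the infinite one), use the one-orbit hypothesis to show that $\phi(p_2)>0$, and then derive a contradiction from the fact that an infinite factor admits no nonzero normal tracial functional.

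For the construction, I would let $\tau_1$ denote the unique normal tracial state on $\sM_1$, and set $\tau = \tau_1\comp\bp_1$, a normal tracial state on $\sM$ with support projection $p_1$. Define
\[
\phi = (\tau \tens \bh_\GG) \comp \alpha.
\]
The coaction identity \eqref{actionEquation} together with left-invariance of $\bh_\GG$ immediately gives $(\phi\tens\id)\alpha=\phi(\,\cdot\,)\I$, so $\phi$ is $\alpha$-invariant. The Kac assumption ensures that $\bh_\GG$ is tracial, whence $\tau\tens\bh_\GG$ is a tracial state on $\sM\vtens\Linf(\GG)$; combining this with $\alpha(xy)=\alpha(x)\alpha(y)$ yields
\[
\phi(xy) = (\tau\tens\bh_\GG)\bigl(\alpha(x)\alpha(y)\bigr) = (\tau\tens\bh_\GG)\bigl(\alpha(y)\alpha(x)\bigr) = \phi(yx).
\]

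To see $\phi(p_2)>0$, symmetry of $\Ralpha$ (Proposition \ref{sym}) promotes the one-orbit hypothesis to $2\Ralpha 1$, so by Lemma \ref{lem:easy} the operator $Y:=(p_1\tens\I)\alpha(p_2)$ is nonzero. Since $\alpha(p_2)$ is a projection, $YY^{*}=(p_1\tens\I)\alpha(p_2)(p_1\tens\I)$ is a nonzero positive element of $\sM_1\vtens\Linf(\GG)$; as $\tau\tens\bh_\GG$ restricts to the faithful trace $\tau_1\tens\bh_\GG$ on this corner and has support $p_1\tens\I$ on the whole algebra,
\[
\phi(p_2) = (\tau\tens\bh_\GG)\bigl(\alpha(p_2)\bigr) = (\tau\tens\bh_\GG)(YY^{*}) > 0.
\]
Centrality of $p_2$ and traciality of $\phi$ then make a positive multiple of $\phi|_{\sM_2}$ into a nonzero normal tracial state on the infinite factor $\sM_2$, the desired contradiction.

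The crux of the argument is producing the invariant trace, and this is precisely where the Kac hypothesis is consumed: without it $\tau\tens\bh_\GG$ would fail to be a trace and the whole approach would collapse. All remaining ingredients — invariance of $\phi$, nonvanishing of $(p_1\tens\I)\alpha(p_2)$, and the nonexistence of normal tracial states on infinite factors — are routine given the machinery already developed in Sections \ref{Kenny} and \ref{Adam}.
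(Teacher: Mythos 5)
Your proof is correct and is essentially the paper's argument: the functional you restrict to the infinite factor, $\phi|_{\sM_2}=(\tau_1\comp\bp_1\tens\bh_\GG)\comp\bigl.\alpha\bigr|_{\sM_2}$, is exactly the paper's pullback of the trace $\tau\tens\bh_\GG$ along the embedding $(\eta\tens\id)\comp\bigl.\alpha\bigr|_{\sM_2}$ of the infinite factor into $\sN\vtens\Linf(\GG)$, with the Kac hypothesis consumed in the same place. (The $\alpha$-invariance of $\phi$ you establish at the outset is never actually used.)
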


\begin{proof}
Towards a contradiction suppose that $\sM$ is an infinite factor and $\sN$ a finite factor with trace $\tau$. Suppose that $\alpha:\sN\oplus\sM\to(\sN\oplus\sM)\tens\Linf(\GG)$ is an action with a unique orbit. Let $\eta:\sN\oplus\sM\to\sN$ be the canonical projection. The $*$-homomorphism $\jmath=(\eta\tens\id)\comp\bigl.\alpha\bigr|_{\sM}:\sM\to\sN\tens\Linf(\GG)$ is an embedding, but then $(\tau\tens\bh_\GG)\comp\jmath$ is a finite trace on $\sM$, which yields a contradiction.
\end{proof}

In particular a compact quantum group cannot act on $M_n(\CC)\oplus\B(\sH)$ for an infinite dimensional Hilbert space $\sH$ with only one orbit.

\subsection{Connectedness and torsion-freeness}

It turns out that having non-trivial orbits in an action on a direct sum of matrix algebras is related with notions of connectedness for compact quantum groups (\cite{Wangconnected}, see also \cite{PinzariConnected}) and torsion-freeness for discrete quantum groups (\cite{Meyer}). Our result can be stated in terms of the following definition:

\begin{definition}
A compact quantum group $\GG$ is said to satisfy the \emph{$(\mathrm{TO})$-condition} (``$\mathrm{TO}$'' standing for \emph{trivial orbits}) if for any action $\alpha$ of $\GG$ on $\sM=\prod\Limits_{i\in\Ind}M_{n_i}(\CC)$ the equivalence classes of $\Ralpha$ consist of single elements.
\end{definition}

It is easy to see that for a classical compact group $G$ the $(\mathrm{TO})$-condition is equivalent to connectedness (for the non-trivial direction see the theorem below). In the quantum setting the situation is more complicated.

\begin{theorem}
Let $\GG$ be a compact quantum group. If $\hh{\GG}$ is torsion free (in the sense of \cite{Meyer}) then $\GG$ satisfies the $(\mathrm{TO})$-condition, and if $\GG$ satisfies the $(\mathrm{TO})$-condition then $\GG$ is connected (in the sense of \cite{Wangconnected}). In general none of these implications can be reversed.
\end{theorem}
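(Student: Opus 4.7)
The theorem has four independent parts: (i) $\hh{\GG}$ torsion-free $\Rightarrow$ $(\mathrm{TO})$; (ii) $(\mathrm{TO})$ $\Rightarrow$ $\GG$ connected; (iii) the converse of (i) fails; (iv) the converse of (ii) fails.

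For (i), the plan is to invoke the Meyer--Vergnioux--Voigt characterization: $\hh{\GG}$ is torsion-free exactly when every ergodic action of $\GG$ on a finite-dimensional C$^*$-algebra is (equivariantly) on a single matrix algebra $M_n(\CC)$. So suppose $\alpha$ is an action of $\GG$ on $\sM=\prod_{i\in\Ind}M_{n_i}(\CC)$ and $A$ is an orbit of $\Ralpha$ with $|A|\geq 2$. By Theorem~\ref{conjfinite} $A$ is finite; by Theorem~\ref{thm:equiv} $\alpha$ restricts to the finite-dimensional algebra $p_A\sM$. For any minimal projection $p\in (p_A\sM)^\alpha$ a direct computation gives $(p\sM p)^\alpha = p\sM^\alpha p=\CC p$, so $\alpha|_{p\sM p}$ is ergodic; torsion-freeness forces $p\sM p\cong M_k(\CC)$ and hence $p\leq p_{i(p)}$ for a unique $i(p)\in A$. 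Summing the minimal projections with support $i$ gives a projection $q_i\leq p_i$ in $\sM^\alpha$, and since $\sum_i q_i=p_A=\sum_{i\in A}p_i$ we get $q_i=p_i$; in particular $p_i\in\sM^\alpha$ for every $i\in A$. But then $\alpha(p_i)=p_i\tens\I$ and Lemma~\ref{lem:easy} reduces the orbit of $i$ to $\{i\}$, contradicting $|A|\geq 2$.

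For (ii), assume $\GG$ is not Wang-connected, so there is a non-trivial finite-dimensional sub-Hopf-$*$-algebra $\iota\colon\Pol(\HH)\hookrightarrow\Pol(\GG)$, where $\HH$ is a non-trivial finite quantum group. Define $\alpha=(\id\tens\iota)\comp\Delta_\HH\colon\Linf(\HH)\to\Linf(\HH)\vtens\Linf(\GG)$. The Hopf identity $\Delta_\GG\comp\iota=(\iota\tens\iota)\comp\Delta_\HH$ gives the coaction property; $(\id\tens\epsilon_\GG)\comp\alpha=\id$ gives injectivity; and applying $\epsilon_\HH\tens\id$ to $\alpha(f)=f\tens\I$ produces $\iota(f)=\epsilon_\HH(f)\I$, whence $f\in\CC\I$ by injectivity of $\iota$, so $\alpha$ is ergodic. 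Since $\epsilon_\HH\colon\Pol(\HH)\to\CC$ is a unital $*$-homomorphism, the decomposition $\Pol(\HH)=\bigoplus_{i\in\Ind}M_{d_i}(\CC)$ has a one-dimensional summand; non-triviality of $\HH$ then forces $|\Ind|\geq 2$. Lemma~\ref{lem:ergodic} produces a single $\Ralpha$-orbit of size $\geq 2$, contradicting $(\mathrm{TO})$.

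For (iii), $\GG=\mathrm{SO}(3)$ is classical and connected, yet $\hh{\mathrm{SO}(3)}$ has torsion because $\mathrm{SO}(3)$ contains non-trivial finite subgroups (e.g.\ the icosahedral group). For any action of a connected classical compact group on $\prod_i M_{n_i}(\CC)$, restriction to the centre $\cZ(\sM)=\ell^\infty(\Ind)$ yields a continuous homomorphism from $\mathrm{SO}(3)$ into the permutation group of $\Ind$ (pointwise-discrete topology), which must be trivial by connectedness, so every $p_i$ is invariant and $(\mathrm{TO})$ holds. For (iv) the plan is to exhibit a connected compact quantum group admitting an ergodic action on a genuinely multi-block finite-dimensional algebra; such examples are expected from module categories over the representation category of $\mathrm{SO}_q(3)$ (or a free orthogonal quantum group) at suitable parameters, where the Ocneanu/Ostrik classification of module categories yields ergodic actions indexed by ADE-type Dynkin diagrams including non-$A$-type (hence multi-block) cases. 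The principal obstacle lies precisely here: one must simultaneously produce a specific multi-block ergodic action of $\GG$ and verify the absence of any finite quantum quotient in Wang's sense, which can be delicate at roots of unity.
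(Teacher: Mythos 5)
Your parts (i) and (ii) are correct. Part (ii) is the paper's own argument. Part (i) is a slightly more hands-on version of the paper's one-line appeal to Voigt's remark that torsion-freeness makes every finite-dimensional action a direct sum of single-block ergodic actions: your reduction via minimal projections of $(p_A\sM)^\alpha$ (using $(p\sM p)^\alpha=p\sM^\alpha p$, that a single-block corner forces $p\leq p_{i(p)}$, and the counting $\sum_i q_i=p_A$) supplies exactly that decomposition and is sound.

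The two counterexamples are where the proposal fails. In (iii) the example $\mathrm{SO}(3)$ is the right one, and your verification that a connected classical group satisfies $(\mathrm{TO})$ (restrict the action to the centre $\linf(\Ind)$) is fine, but your reason for $\hh{\mathrm{SO}(3)}$ not being torsion-free --- ``$\mathrm{SO}(3)$ contains non-trivial finite subgroups'' --- is wrong. A finite subgroup $H\leq G$ of infinite index only produces an ergodic action on the \emph{infinite}-dimensional algebra $C(G/H)$, which is irrelevant to Meyer's notion; indeed $\mathrm{SU}(2)$ contains the binary icosahedral group and yet $\hh{\mathrm{SU}(2)}$ \emph{is} torsion-free. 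The actual torsion witness, as in the paper, is the ergodic adjoint action of $\mathrm{SO}(3)$ on $M_2(\CC)$ coming from the projective spin representation obtained by lifting along $\mathrm{SU}(2)\twoheadrightarrow\mathrm{SO}(3)$: a single matrix block whose action is not Morita equivalent to the trivial one because the cocycle does not trivialize (equivalently, $\pi_1(\mathrm{SO}(3))=\mathbb{Z}/2$ has torsion).

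Part (iv) is simply not proved: you describe a programme (ADE module categories over $\mathrm{Rep}\,\mathrm{SO}_q(3)$) and explicitly concede that exhibiting the multi-block ergodic action and checking connectedness is the remaining difficulty. There is an elementary example, which the paper uses: for $n\geq 4$ the quantum permutation group $S_n^+$ is connected by Wang's result, and its defining action on $\bigoplus_{i=1}^n\CC$ is ergodic, so by Lemma \ref{lem:ergodic} the $n$ one-dimensional blocks form a single orbit and the $(\mathrm{TO})$-condition fails. Without such a concrete example the statement ``none of the implications can be reversed'' is only half established.
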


\begin{proof}
Assume first that $\hh{\GG}$ is torsion free and we have an action $\alpha$ of $\GG$ on $\sM=\prod\Limits_{i\in\Ind}M_{n_i}(\CC)$. By Theorem \ref{conjfinite} we may assume that $\Ind$ is finite. Then the definition of torsion-freeness implies however that the action $\alpha$ is (isomorphic to) a direct sum of actions on the components (see \cite[comments after Definition 3.1]{Voigt}) and the proof of the first implication is finished.

Suppose now that $\GG$ is not connected. This means that there is a non-trivial finite quantum group $\HH$ such that $\Pol(\HH)\subset\Pol(\GG)$ (as a Hopf $*$-subalgebra). Then the coproduct of $\HH$ can be interpreted as an action $\alpha$ of $\GG$ on $\Linf(\HH)=\Pol(\HH)$. However, the algebra $\Pol(\HH)$ is a direct sum of at least two matrix algebras by the existence of the counit, and it is easy to see (for example using the Haar state of $\HH$) that $\Ralpha$ must be a total relation. Thus $\GG$ does not satisfy the $(\mathrm{TO})$-condition.

To see that the converse to the second implication does not hold, it suffices to note that for $n\geq{4}$ the quantum permutation group $S_n^+$ is connected (\cite{Wangconnected}) and for its defining action $\alpha$ on $\bigoplus\Limits_{i=1}^n\CC$ the relation $\Ralpha$ has a single equivalence class. The first implication cannot be reversed, since for example the classical group $\mathrm{SO}(3)$ is connected and hence satisfies the $(\mathrm{TO})$-condition, but its dual is not torsion-free as it acts on $M_2(\CC)$ by the adjoint action with the non-trivial projective representation obtained by lifting along the two-fold covering $\mathrm{SU}(2)\twoheadrightarrow\mathrm{SO}(3)$ the fundamental representation of $SU(2)$. In fact, examples are precisely provided by those compact groups which are connected but whose fundamental group is not torsion-free, see the discussion in \cite[Section 7.2]{Meyer}.
\end{proof}

\subsection{Actions on countable discrete space}

We now specialize further to the situation where the discrete quantum space on which $\GG$ is acting is in fact classical and countable, i.e.~we consider an action $\alpha\colon\linf(\NN)\to\linf(\NN)\vtens\Linf(\GG)$ of a compact quantum group $\GG$ on $\NN$. In this context, for each $n\in\NN$ the projection $p_n$ is the characteristic function of $\{n\}$ and we let $\delta_n$ be the evaluation map $\linf(\NN)\ni{f}\mapsto{f(n)}\in\CC$. Then for $i,j\in\NN$ we define
\[
u_{i,j}=(\delta_i\tens\id)\alpha(p_j).
\]
It is clear that $\{u_{i,j}\}_{i,j\in\NN}$ are self-adjoint projections in $\Linf(\GG)$ and that for each $i\in\NN$
\begin{equation}\label{orth}
\sum_{j=1}^{\infty}u_{i,j}=\I
\end{equation}
by unitality of $\alpha$. Thus the $\{u_{i,j}\}_{j\in\NN}$ are pairwise orthogonal. We can write the value of $\alpha$ on $p_j$ as
\begin{equation}\label{alphapj}
\sum_{i=1}^{\infty}p_i\tens{u_{i,j}}
\end{equation}
with the sum strongly convergent. Clearly in this situation for any $i,j\in\NN$ we have $\alpha_{i,j}\neq{0}$ if and only if $u_{i,j}\neq{0}$, and so the relation $\Ralpha$ has a particularly simple description:
\[
\Bigl(\,k\Ralpha{l}\,\Bigr)\;\Longleftrightarrow\;\Bigl(\,u_{k,l}\neq{0}\,\Bigr).
\]
Using Theorem \ref{thm:equiv} we conclude that $\Ralpha$ is an equivalence relation. Indeed this follows from the identification $\linf(\NN)=\prod\Limits_{i\in\NN}\sM_i$ where $\sM_i=\CC$ for each $i$. The third statement of Theorem \ref{thm:equiv} and Theorem \ref{conjfinite} specialize in this context to the following two corollaries.

\begin{corollary}\label{cor:inv}
Let $i\in\NN$. Then $\sum\Limits_{j\Ralpha{i}}p_j$ is $\alpha$-invariant.
\end{corollary}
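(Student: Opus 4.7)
The plan is to observe that this corollary is essentially a direct specialization of the third statement of Theorem \ref{thm:equiv} to the present setting. In the decomposition $\linf(\NN)=\prod\Limits_{n\in\NN}\sM_n$ with each $\sM_n=\CC$, every summand is a (one-dimensional) factor, so Theorem \ref{thm:equiv} applies and tells us that $\Ralpha$ is an equivalence relation on $\NN$ and that for every equivalence class $A$ the projection $p_A=\sum_{j\in A}p_j$ satisfies $\alpha(p_A)=p_A\tens\I$.

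First I would identify the set appearing in the statement of the corollary with an equivalence class. Since $\Ralpha$ is reflexive (as noted in the discussion right before the corollary, using symmetry and transitivity together with the fact that $\alpha$ is injective, so that for each $i$ there is some $j$ with $i\Ralpha j$, whence by symmetry and transitivity $i\Ralpha i$), the set $A_i=\{j\in\NN\st j\Ralpha i\}$ is precisely the equivalence class of $i$. Second I would invoke Theorem \ref{thm:equiv} directly: applied to $A=A_i$, it yields $\alpha(p_{A_i})=p_{A_i}\tens\I$, which is exactly the claim that $\sum\Limits_{j\Ralpha i}p_j$ is $\alpha$-invariant.

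There is no genuine obstacle here; the only thing worth flagging is that one must check the hypothesis of Theorem \ref{thm:equiv}, namely that each $\sM_i$ is a factor, which is trivial since $\sM_i=\CC$. Alternatively, one can read the invariance off from formula \eqref{alphapj}: summing that formula over $j\in A_i$ gives $\alpha(p_{A_i})=\sum_{i'\in\NN}p_{i'}\tens\bigl(\sum_{j\in A_i}u_{i',j}\bigr)$, and by definition of $\Ralpha$ together with \eqref{orth} the inner sum equals $\I$ when $i'\in A_i$ and $0$ otherwise, yielding $\alpha(p_{A_i})=p_{A_i}\tens\I$. Either route completes the proof in essentially one line.
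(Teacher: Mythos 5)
Your proof is correct and follows exactly the route the paper intends: the paper states this corollary as an immediate specialization of the third part of Theorem \ref{thm:equiv} to the case $\sM_i=\CC$, with no further argument, which is precisely your main line (and your alternative computation via \eqref{alphapj} and \eqref{orth} is also valid).
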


\begin{corollary}
The equivalence classes of $\Ralpha$ are of finite cardinality.
\end{corollary}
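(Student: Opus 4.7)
The plan is to derive the corollary as a direct specialization of Theorem \ref{conjfinite}. Indeed, the setup of the subsection already identifies $\linf(\NN)$ with $\prod\Limits_{i\in\NN}\sM_i$ where each $\sM_i=\CC=M_1(\CC)$, so $\linf(\NN)$ is a particular instance of the product $\prod\Limits_{i\in\Ind}M_{n_i}(\CC)$ (with $\Ind=\NN$ and all $n_i=1$), and each component is trivially a factor. Thus the action $\alpha\colon\linf(\NN)\to\linf(\NN)\vtens\Linf(\GG)$ fits exactly into the framework of Theorem \ref{conjfinite}.

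First I would verify explicitly that the relation $\Ralpha$ discussed in this subsection (described via $k\Ralpha l\Leftrightarrow u_{k,l}\neq 0$) coincides with the equivalence relation of Definition \ref{Def:Relation}. This is essentially the observation already made in the text: $\alpha_{i,j}=(\bp_i\tens\id)\comp\alpha\comp(\I_{\sM_j}\cdot)$ is non-zero if and only if $u_{i,j}=\alpha_{i,j}(\I_{\sM_j})\neq 0$. Combined with Theorem \ref{thm:equiv}, which applies since each $\sM_i=\CC$ is a factor, we conclude that $\Ralpha$ is an equivalence relation on $\NN$.

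Then I would simply invoke Theorem \ref{conjfinite}: that theorem rules out the existence of an action on any $\prod_{i\in\Ind}M_{n_i}(\CC)$ whose associated relation $\Ralpha$ has an infinite equivalence class. Specializing to $n_i=1$ for all $i$, no equivalence class of $\Ralpha$ on $\NN$ can be infinite, proving the corollary. There is no genuine obstacle here; the only thing to be careful about is simply matching the notation of this subsection with the general hypothesis of Theorem \ref{conjfinite}, which is immediate.
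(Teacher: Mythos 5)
Your proposal is correct and matches the paper exactly: the text introduces this corollary precisely as the specialization of Theorem \ref{conjfinite} to $\linf(\NN)=\prod\Limits_{i\in\NN}\CC$, i.e.\ to the case where every $M_{n_i}(\CC)$ is one-dimensional, after identifying the relation $u_{k,l}\neq 0$ with the relation of Definition \ref{Def:Relation}. Nothing further is needed.
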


\begin{proposition}\label{countpreserve}
The counting measure is invariant for the action $\alpha$. In particular for any $j\in\NN$ we have
\begin{equation}\label{inv}
\sum_{i\Ralpha{j}}u_{i,j}=\I.
\end{equation}
\end{proposition}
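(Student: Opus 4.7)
The plan is to reduce the claim to a single finite orbit, extract a key identity from Haar invariance, and then reduce the remaining content to uniformity of the invariant state on a finite commutative algebra.

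Fix $j\in\NN$ and set $A:=[j]_{\Ralpha}$, so that $n:=|A|<\infty$ by the preceding corollary. Since $u_{i,j}=0$ whenever $i\not\Ralpha j$, we have $\sum_{i\Ralpha j}u_{i,j}=\sum_{i\in A}u_{i,j}$. By Corollary~\ref{cor:inv} the projection $p_A=\sum_{i\in A}p_i$ is $\alpha$-invariant, so $\alpha$ restricts to a faithful ergodic action $\alpha_A$ of $\GG$ on $\linf(A)\cong\CC^n$, and it suffices to prove $\sum_{i\in A}u_{i,j}=\I$ for this restricted action. Writing $\mu_i:=\bh_A(p_i)>0$ (positivity by ergodicity), the defining property $(\id\tens\bh_\GG)\alpha(p_j)=\bh_A(p_j)\I$ yields $\bh_\GG(u_{i,j})=\mu_j$ for all $i\in A$. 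Applying left invariance of $\bh_\GG$ to the coproduct formula $\Delta u_{i,j}=\sum_{k\in A}u_{i,k}\tens u_{k,j}$ then produces the identity
\[
\sum_{k\in A}\mu_k u_{k,j}=\mu_j\,\I.
\]
The desired equality $\sum_{k\in A}u_{k,j}=\I$ is therefore equivalent to $\mu_k=\mu_j$ for all $k,j\in A$, i.e.\ to the $\alpha_A$-invariant state on $\CC^n$ being the uniform measure $n^{-1}\sum_i\delta_i$.

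The main (and only serious) obstacle is to prove this uniformity. I would argue it via Wang's universality theorem for actions on finite classical sets: any compact quantum group acting on $\CC^n$ factors through the quantum permutation group $S_n^+$. Concretely, the Hopf $*$-subalgebra of $\Pol(\GG)$ generated by the coefficients $u_{i,j}$ realises the quotient quantum group $\GG_0$ of $\GG$ through which $\alpha_A$ factors faithfully as a quantum subgroup of $S_n^+$. Since $S_n^+$ is of Kac type, so is $\GG_0$, so the inherited antipode satisfies $S(u_{i,j})=u_{j,i}$ and the Haar state is tracial; in particular $\bh_\GG\circ S=\bh_\GG$ on $\Pol(\GG_0)$. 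This gives
\[
\mu_j=\bh_\GG(u_{i,j})=\bh_\GG\bigl(S(u_{i,j})\bigr)=\bh_\GG(u_{j,i})=\mu_i,
\]
so $\mu$ is constant on the orbit and equals $1/n$. Substituting $\mu_k=1/n$ into the displayed identity yields $\sum_{k\in A}u_{k,j}=\I$, which is the second assertion of the proposition. The first assertion, invariance of the counting measure, is the immediate reformulation $(\text{count}\tens\id)\alpha(p_j)=\sum_{i\in A}u_{i,j}=\I=\text{count}(p_j)\I$ for every $j\in\NN$.
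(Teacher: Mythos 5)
Your overall strategy coincides with the paper's: reduce to a single finite orbit (via Theorem \ref{conjfinite} and Corollary \ref{cor:inv}), restrict to get an action on a finite classical set, and then appeal to Wang's results on such actions. Your preliminary reductions are correct: the identity $\sum_{k\in A}\mu_k u_{k,j}=\mu_j\I$ obtained from invariance of $\bh_\GG$, and the observation that \eqref{inv} is equivalent to uniformity of the invariant measure on the orbit, are both fine and in fact refine what the paper records in Remark \ref{hN}.

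The problem lies in the step you single out as ``the main (and only serious) obstacle.'' To say that $\alpha_A$ factors through $S_n^+$ is to say that $a_{ij}\mapsto u_{i,j}$ extends to a $*$-homomorphism out of $\mathrm{C}(S_n^+)$, and for that the $u_{i,j}$ must satisfy \emph{all} of the magic-unitary relations — including the column relation $\sum_i u_{i,j}=\I$, which is literally \eqref{inv}. So the universality statement you invoke already contains the conclusion, and the subsequent detour through Kac type, $S(u_{i,j})=u_{j,i}$ and traciality of the Haar state is redundant; conversely, if you intended to establish the factorization yourself from the relations you have actually derived (projections, row sums equal to $\I$, the coproduct formula), the argument would be circular, since the column condition is exactly the missing ingredient. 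The honest citation is the one the paper makes: Wang proves in \cite[Theorem 3.1 and Remark (2)]{qsym} that the counting measure is automatically invariant for any compact quantum group action on a finite classical set (equivalently, that the coefficient matrix is automatically a magic unitary), and \eqref{inv} then follows in one line. Two minor points besides: faithfulness of $\alpha_A$ plays no role, and the positivity of the $\mu_i$ — which you do need in order to cancel $\mu_j$ at the end — deserves a word of justification, e.g.\ $\mu_i=\bh_\GG(u_{j,i})$ for every $j$, with $\bh_\GG$ faithful and at least one $u_{j,i}\neq 0$ by injectivity of $\alpha$.
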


\begin{proof}
We know by Corollary \ref{cor:inv} that all orbits of $\alpha$ are finite. By restriction, we obtain an action of $\GG$ on each orbit. By results of Wang \cite[Theorem 3.1 and Remark (2)]{qsym} the counting measure on each orbit is invariant for these restricted actions. It follows that the counting measure on $\NN$ is invariant for $\alpha$. Equation \eqref{inv} means precisely that the counting measure is preserved.
\end{proof}

Let us also make the following remarks:

\begin{remark}\label{hN}
Let $A$ be an equivalence class of $\Ralpha$. Then for any $i,j\in{A}$ we have \[\bh_\GG(u_{i,j})=\tfrac{1}{|A|}.\] Indeed, applying $(\id\tens\bh_\GG)$ to both sides of
\begin{equation}\label{Deluij}
\Delta_\GG(u_{i,j})=\sum\Limits_{k=1}^\infty{u_{i,k}}\tens{u_{k,j}},\qqquad{i,j}\in\NN,
\end{equation}
we obtain
\[
\bh_\GG(u_{i,j})\I=\sum_{k\in A}^\infty{u_{i,k}}\bh_\GG(u_{k,j}),
\]
where we can restrict the sum to the terms with $k\in{A}$, since only then $u_{i,k}$ is non-zero. Now the left hand side is non-zero since $u_{i,j}\neq 0$ and $\bh_\GG$ is faithful on $\Linf(\GG)$, so
\[
\I=\sum_{k\in{A}}{u_{i,k}}\tfrac{\bh_\GG(u_{k,j})}{\bh_\GG(u_{i,j})}.
\]
Now since $\{u_{i,k}\}_{k\in{A}}$ are pairwise orthogonal projections summing up to $\I$, the coefficients in the sum must all be equal to $1$. It follows that $\bh_{\GG}(u_{i,j})$ is independent of $i\in A$. Applying now $\bh_{\GG}$ to both sides of \eqref{inv}, we see that $\bh_{\GG}(u_{i,j})$ must be the constant $\tfrac{1}{|A|}$.
\end{remark}

\begin{remark}\label{ergodic=oneorbit}
In the case of an action $\alpha$ of $\GG$ on $\NN$, the converse of Lemma \ref{lem:ergodic} holds. Indeed, if $\alpha$ is not ergodic, let $p\in\linf(\NN)^{\alpha}$ be any non-zero projection different from the unit. Then $p$ is the characteristic function of some non-empty subset $I\subsetneqq\NN$, and clearly no point of $I$ can be equivalent to a point in its complement.
\end{remark}

As mentioned at the beginning of Section \ref{Adam}, the relation $\Ralpha$ is the quantum group analogue of the relation of being in the same orbit of the action of $\GG$. It can be shown that it formally coincides with the relation introduced in \cite[Section 4.2]{huang} albeit in the context of compact quantum group actions on \emph{compact} spaces in the \cst-algebraic framework.

Using Lemma \ref{lem:ergodic} and Theorem \ref{mainthm} we also see that there is no ergodic action $\alpha\colon\linf(\NN)\to\linf(\NN)\vtens\Linf(\GG)$. In what follows we shall give an elementary proof of this fact, independent of the more involved proof of Theorem \ref{mainthm} and using only Theorem \ref{thm:equiv} and Lemma \ref{lem:ergodic}.

\begin{theorem}\label{noact}
There is no ergodic action of a compact quantum group on $\NN$.
\end{theorem}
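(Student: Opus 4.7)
The plan is to argue by contradiction: suppose $\alpha\colon\linf(\NN)\to\linf(\NN)\vtens\Linf(\GG)$ is ergodic on the infinite set $\NN$. By Lemma \ref{lem:ergodic} the single $\Ralpha$-class equals all of $\NN$, so every projection $u_{i,j}$ is non-zero and, by faithfulness of $\bh_\GG$ on $\Linf(\GG)$, every number $\bh_\GG(u_{i,j})$ is strictly positive. I would first pin down these Haar masses by applying $(\id\tens\bh_\GG)$ to \eqref{Deluij} and invoking right invariance of $\bh_\GG$:
\[
\bh_\GG(u_{i,j})\I=\sum_{k\in\NN}u_{i,k}\,\bh_\GG(u_{k,j}).
\]
Since the family $\{u_{i,k}\}_{k\in\NN}$ consists of non-zero pairwise orthogonal projections summing to $\I$ by \eqref{orth}, the coefficients on the right are forced to coincide, so $c_j:=\bh_\GG(u_{i,j})$ depends only on $j$ and satisfies $c_j>0$.

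The crux is to promote this to independence from $j$ as well. For this I would exploit left invariance of the Haar state on the same coproduct identity to get
\[
c_j\I=(\bh_\GG\tens\id)\Delta_\GG(u_{i,j})=\sum_{k\in\NN}c_k\,u_{k,j}.
\]
Conjugating by the projection $u_{m,j}$ and isolating the $k=m$ summand on the right yields
\[
(c_j-c_m)u_{m,j}=\sum_{k\neq m}c_k\,u_{m,j}u_{k,j}u_{m,j}.
\]
Since $u_{m,j}u_{k,j}u_{m,j}=(u_{k,j}u_{m,j})^*(u_{k,j}u_{m,j})\geq 0$ and each $c_k>0$, the right-hand side is non-negative. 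As $u_{m,j}\neq 0$, this forces $c_j\geq c_m$; swapping the roles of $j$ and $m$ yields equality, so the $c_j$ share a single positive value $c$.

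For the contradiction, I would apply the normal state $\bh_\GG$ to the strongly convergent decomposition $\sum_{j\in\NN}u_{i,j}=\I$ of \eqref{orth}, obtaining $\sum_{j\in\NN}c=1$. With $c>0$ this is impossible for infinite $\NN$, so no such $\alpha$ can exist. The step I expect to be delicate is the middle one, establishing constancy of $c_j$ in $j$: the essential device is the conjugation trick $u_{m,j}(\,\cdot\,)u_{m,j}$, which uses only self-adjointness of the projections and positivity of the weights $c_k$ to convert the left-invariance identity into the one-sided scalar comparison $c_j\geq c_m$.
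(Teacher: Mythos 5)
Your argument is correct. It shares the paper's overall skeleton --- derive a ``doubly stochastic'' identity for a family of positive weights, prove the weights are constant, and contradict normality of a state whose infinitely many equal positive masses must sum to $1$ --- but the two middle steps are executed differently. The paper works with the unique $\alpha$-invariant state $\bh_\NN$ on $\linf(\NN)$ and obtains $\bh_\NN(p_j)\I=\sum_i\bh_\NN(p_i)u_{i,j}$ directly from invariance; you instead extract the weights from the Haar state via \eqref{Deluij} and right invariance, which costs you the extra preliminary step of showing $\bh_\GG(u_{i,j})$ is independent of $i$ (essentially the computation of Remark \ref{hN}, but without appealing to \eqref{inv}, which is unavailable here since it rests on Wang's finite-orbit result). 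In fact your $c_j$ coincides with $\bh_\NN(p_j)$, since $\bh_\NN(p_j)\I=(\id\tens\bh_\GG)\alpha(p_j)=\sum_i p_i\,\bh_\GG(u_{i,j})$, so your left-invariance identity $c_j\I=\sum_k c_k u_{k,j}$ is literally the paper's \eqref{rhopj}. Where you genuinely diverge is the proof that $c_j$ is constant in $j$: the paper uses vector states supported in the ranges of $u_{j,j}$ and $u_{k,j}$, first establishing orthogonality of those ranges, whereas you compress by $u_{m,j}$ and use positivity of $u_{m,j}u_{k,j}u_{m,j}=(u_{k,j}u_{m,j})^*(u_{k,j}u_{m,j})$ to get $c_j\geq c_m$ in one stroke. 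Your version is arguably cleaner, as it needs no intermediate orthogonality claim; the paper's version avoids any use of the comultiplication identity \eqref{Deluij} and of the Haar state's bi-invariance and faithfulness. All the analytic points you rely on (strong convergence of $\sum_k u_{i,k}\tens u_{k,j}$, interchanging normal slices with the sum, normality of $\bh_\GG$ applied to \eqref{orth}) are sound and at the same level of rigour as the paper's own treatment.
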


\begin{proof}
Let us assume that $\alpha\colon\linf(\NN)\to\linf(\NN)\vtens\Linf(\GG)$ is an ergodic action. In particular by Lemma \ref{lem:ergodic} we know that all $u_{i,j}$ are non-zero. Let $\bh_\NN$ be the invariant state on $\linf(\NN)$. For each $j$ we have
\begin{equation}\label{rhopj}
\bh_\NN(p_j)\I=\sum_{i=1}^\infty\bh_\NN(p_i)u_{i,j}.
\end{equation}
Moreover, since $\bh_\NN$ is a normal state
\begin{equation}\label{sumrho}
\sum_{i=1}^{\infty}\bh_\NN(p_i)=1.
\end{equation}

Now take a unit vector $\xi$ in the range of $u_{j,j}$. Applying both sides of \eqref{rhopj} to $\xi$ and taking scalar product with $\xi$ we obtain
\[
\bh_\NN(p_j)=\bh_\NN(p_j)+\sum_{i\neq{j}}\bh_\NN(p_i)\is{\xi}{u_{i,j}\xi}.
\]
Since all coefficients $\bh_\NN(p_i)$ in the above sum are non-zero, we find that $\is{\xi}{u_{i,j}\xi}=0$ for $i\neq{j}$, so the range of $u_{i,j}$ is orthogonal to that of $u_{j,j}$. Now fix $k\neq{j}$ and let $\eta$ be a unit vector in the range of $u_{k,j}$. Applying both sides of \eqref{rhopj} to $\eta$ and taking scalar product with $\eta$ we get
\[
\bh_\NN(p_j)=\sum_{i=1}^{\infty}\bh_\NN(p_i)\is{\eta}{u_{i,j}\eta}=\bh_\NN(p_j)\is{\eta}{u_{j,j}\eta}
+\bh_\NN(p_k)\is{\eta}{u_{k,j}\eta}+\sum_{k\neq{i}\neq{j}}^{\infty}\bh_\NN(p_i)\is{\eta}{u_{i,j}\eta}.
\]
However, the first term on the right hand side is zero because the range of $u_{j,j}$ is orthogonal to the range of $u_{k,j}$. This means that
\[
\bh_\NN(p_j)=\bh_\NN(p_k)\is{\eta}{u_{k,j}\eta}
+\sum_{k\neq{i}\neq{j}}^{\infty}\bh_\NN(p_i)\is{\eta}{u_{i,j}\eta}
\]
so $\bh_\NN(p_j)\leq\bh_\NN(p_k)$. Exchanging the roles of $j$ and $k$ yields opposite inequality and we find
\[
\bh_\NN(p_j)=\bh_\NN(p_k)
\]
for all $k,j\in\NN$ which contradicts \eqref{sumrho}.
\end{proof}

\section{Applications}

\subsection{Quantum Clifford theory}\label{QCliffordTheory}

Before stating our generalization of the theorem of Clifford (\cite[Theorem 1]{clifford}) let us note one result concerning normal subgroups of locally compact quantum groups. Recall that a closed quantum subgroup $\HH$ of a locally compact quantum group $\GG$ is normal if and only if $\Linf(\hh{\HH})$ is invariant under the natural action of $\GG$ on $\Linf(\hh{\GG})$ by ``inner automorphisms'' (\cite{extensions}, \cite[Section 4]{centers}). It turns out that in this case the action actually restricts to an action on the center of $\Linf(\hh{\HH})$ which translates into an action on a classical space.

\begin{theorem}\label{action}
Let $\GG$ be a locally compact quantum group and $\HH\subset\GG$ a normal closed quantum subgroup of $\GG$. Then for any $x\in\cZ\bigl(\Linf(\hh{\HH})\bigr)$ we have
\[
\ww^\GG(x\tens\I){\ww^\GG}^*\in\cZ\bigl(\Linf(\hh{\HH})\bigr)\vtens\Linf(\GG).
\]
In particular $x\mapsto\ww^\GG(x\tens\I){\ww^\GG}^*$ is an action of $\GG$ on $\cZ\bigl(\Linf(\hh{\HH})\bigr)$.
\end{theorem}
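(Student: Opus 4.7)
The plan is to reduce the claim to showing that the ``reverse'' conjugation $\beta(y):={\ww^\GG}^*(y\tens\I)\ww^\GG$ also maps $\Linf(\hh{\HH})$ into $\Linf(\hh{\HH})\vtens\Linf(\GG)$, and then to observe that the desired commutation follows for free. As a preparatory step I would note that, since $z\in\cZ\bigl(\Linf(\hh{\HH})\bigr)$, the element $z\tens\I$ is central in $\Linf(\hh{\HH})\vtens\Linf(\GG)$, and that the relative commutant of $\Linf(\hh{\HH})\tens\CC\I$ inside $\Linf(\hh{\HH})\vtens\Linf(\GG)$ is precisely $\cZ\bigl(\Linf(\hh{\HH})\bigr)\vtens\Linf(\GG)$. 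The assumed normality of $\HH$ already places $\alpha(z):=\ww^\GG(z\tens\I){\ww^\GG}^*$ inside $\Linf(\hh{\HH})\vtens\Linf(\GG)$, so what remains is to verify the commutation $[\alpha(z),\,y\tens\I]=0$ for every $y\in\Linf(\hh{\HH})$.

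The next step is a short algebraic manipulation using only the unitarity of $\ww^\GG$ and the definition of $\beta$; inserting $\ww^\GG{\ww^\GG}^*=\I$ in the right places gives the key identity
\[
[\alpha(z),\,y\tens\I]\;=\;\ww^\GG\,\bigl[\,z\tens\I,\,\beta(y)\,\bigr]\,{\ww^\GG}^*,
\]
so the left-hand side vanishes if and only if $\beta(y)$ commutes with $z\tens\I$. Since $z\tens\I$ has already been shown to be central in $\Linf(\hh{\HH})\vtens\Linf(\GG)$, this latter commutation is automatic as soon as one knows that $\beta(y)\in\Linf(\hh{\HH})\vtens\Linf(\GG)$ for every $y\in\Linf(\hh{\HH})$.

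The main obstacle is therefore this invariance of $\beta$ on $\Linf(\hh{\HH})$, and I would address it using the unitary antipodes. Combining the standard identity $(R^{\hh{\GG}}\tens R^\GG)(\ww^\GG)=\ww^\GG$ for the right regular representation of a locally compact quantum group with the antimultiplicativity of $R^{\hh{\GG}}\tens R^\GG$ on the von Neumann tensor product and the involutivity of $R^{\hh{\GG}}$, a direct calculation yields
\[
\beta(y)\;=\;(R^{\hh{\GG}}\tens R^\GG)\bigl(\alpha(R^{\hh{\GG}}(y))\bigr).
\]
For a closed quantum subgroup $\HH\subset\GG$ the unitary antipode $R^{\hh{\GG}}$ restricts to $R^{\hh{\HH}}$ on $\Linf(\hh{\HH})$, so $R^{\hh{\GG}}(y)\in\Linf(\hh{\HH})$; normality then gives $\alpha(R^{\hh{\GG}}(y))\in\Linf(\hh{\HH})\vtens\Linf(\GG)$, and since $R^{\hh{\GG}}\tens R^\GG$ preserves $\Linf(\hh{\HH})\vtens\Linf(\GG)$ as a $\ast$-antiautomorphism, the image $\beta(y)$ lies in the same tensor product. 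This closes the argument; the last line of the theorem, asserting that $x\mapsto\ww^\GG(x\tens\I){\ww^\GG}^*$ is an \emph{action} of $\GG$ on $\cZ\bigl(\Linf(\hh{\HH})\bigr)$, is then immediate from the fact that it was already an action on $\Linf(\hh{\HH})$ by normality.
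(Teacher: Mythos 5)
Your proposal is correct and follows essentially the same route as the paper: both arguments apply $R^{\hh{\GG}}\tens R^\GG$ to the normality inclusion to deduce that ${\ww^\GG}^*\bigl(\Linf(\hh{\HH})\tens\I\bigr)\ww^\GG\subset\Linf(\hh{\HH})\vtens\Linf(\GG)$, and then insert $\ww^\GG{\ww^\GG}^*$ to convert the required commutation of $\ww^\GG(x\tens\I){\ww^\GG}^*$ with $y\tens\I$ into the (automatic) commutation of $x\tens\I$ with ${\ww^\GG}^*(y\tens\I)\ww^\GG$. Your formulation via $\beta(y)=(R^{\hh{\GG}}\tens R^\GG)\bigl(\alpha(R^{\hh{\GG}}(y))\bigr)$ and the relative-commutant identification is just a slightly more explicit packaging of the identical argument.
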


\begin{proof}
Normality of $\HH$ means that
\[
\ww^\GG\bigl(\Linf(\hh{\HH})\tens\I\bigr){\ww^\GG}^*\subset
\Linf(\hh{\HH})\vtens\Linf(\GG).
\]
Applying the tensor product of the respective unitary antipodes $R^{\hh{\GG}}\tens{R^\GG}$ to both sides we get
\[
{\ww^\GG}^*\bigl(\Linf(\hh{\HH})\tens\I\bigr)\ww^\GG\subset\Linf(\hh{\HH})\vtens\Linf(\GG).
\]
Take $y\in\Linf(\hh{\HH})$ and $x\in\cZ\bigl(\Linf(\hh{\HH})\bigr)$. We have
\[
\begin{split}
\ww^\GG(x\tens\I){\ww^\GG}^*(y\tens\I)&=\ww^\GG(x\tens\I){\ww^\GG}^*(y\tens\I)\ww^\GG{\ww^\GG}^*\\
&=\ww^\GG{\ww^\GG}^*(y\tens\I)\ww^\GG(x\tens\I){\ww^\GG}^*\\
&=(y\tens\I)\ww^\GG(x\tens\I){\ww^\GG}^*
\end{split}
\]
which means that the left leg of $\ww^\GG(x\tens\I){\ww^\GG}^*$ belongs to $\cZ\bigl(\Linf(\hh{\HH})\bigr)$.

The fact that $x\mapsto\ww^\GG(x\tens\I){\ww^\GG}^*$ is an action of $\GG$ on $\cZ\bigl(\Linf(\hh{\HH})\bigr)$ is easily checked (cf.~\cite[Section 4]{centers}).
\end{proof}

Note further that if the subgroup $\HH$ is compact then $\linf(\hh{\HH})$ is a direct sum of matrix algebras and $\cZ\bigl(\linf(\hh{\HH})\bigr)$ is naturally isomorphic to the algebra of bounded functions on the set $\Irr{\HH}$ of equivalence classes of irreducible unitary representations of $\HH$. This, in particular, happens if $\GG$ is compact (cf.~\cite[Section 6]{DKSS}), so in this case we obtain an action of $\GG$ on $\Irr{\HH}$; we will return to investigating it below, but as suggested in the introduction we begin from a more general context of arbitrary quantum subgroups of discrete quantum groups.

Let $\GGamma$ be a discrete quantum group and let $\LLambda$ be a quantum subgroup of $\GGamma$. Then $\LLambda$ is automatically discrete and open in $\GGamma$ (\cite{KalKS}). In particular we have a surjective normal map $\bpi\colon\linf(\GGamma)\to\linf(\LLambda)$ commuting with comultiplications. The quantum homogeneous space $\LLambda\backslash\GGamma$ is defined by setting
\[
\linf(\LLambda\backslash\GGamma)=\bigl\{x\in\linf(\GGamma)\st(\bpi\tens\id)\Delta_\GGamma(x)=\I\tens{x}\bigr\}.
\]
Moreover the right action of $\hh{\GGamma}$ on $\GGamma$ (\cite[Section 4]{centers}) restricts to an action of $\hh{\GGamma}$ on $\LLambda\backslash\GGamma$, i.e.~we have
\[
\ww^{\hh{\GGamma}}\bigl(\linf(\LLambda\backslash\GGamma)\tens\I\bigr){\ww^{\hh{\GGamma}}}^*
\subset\linf(\LLambda\backslash\GGamma)\vtens\Linf(\hh{\GGamma})
\]
(\cite[Propositions 4.3 \& 3.4]{ext}) which defines an action $\alpha:\linf(\LLambda\backslash\GGamma)\to\linf(\LLambda\backslash\GGamma)\vtens\Linf(\hh{\GGamma})$. Note that the fixed point algebra of this action equals
$\cZ\bigl(\linf(\GGamma)\bigr) \cap \linf(\LLambda\backslash\GGamma) $.

As $\linf(\LLambda\backslash\GGamma)$ is a subalgebra of
\[
\linf(\GGamma)=\prod_{\iota\in\Irr{\hh{\GGamma}}}M_{n_\iota}(\CC)
\]
it is itself isomorphic to a product of matrix algebras:
\[
\linf(\LLambda\backslash\GGamma)=\prod_{i\in\Ind}\sM_i
\]
with each $\sM_i$ isomorphic to $M_{m_i}(\CC)$ for some $m_i\in\NN$ and we are in the situation described in Section \ref{Adam}. In particular we will consider the equivalence relation $\Ralpha$ on $\Ind$ associated to the action $\alpha$. Departing from our earlier convention, for each $i\in\Ind$ we will write $\I_i$ for the unit of $\sM_i$ considered as an element of $\linf(\GGamma)$. Note that although the unit of $\sM_i$ is a minimal central projection in $\linf(\LLambda\backslash\GGamma)$, it is usually not central (nor minimal) in $\linf(\GGamma)$.

\begin{theorem}\label{thm:act_sub}
Let $\GGamma$, $\LLambda$ and $\alpha$ be as above. Then for all $i\in\Ind$ the element
\[
\sum\Limits_{j\Ralpha{i}}\I_j\in\linf(\LLambda\backslash\GGamma)\subset\linf(\GGamma)
\]
is the central support $\z(\I_i)$ in $\linf(\GGamma)$ of the projection $\I_i$. Moreover $\z(\I_i)$ is orthogonal to $\z(\I_j)$ if $i$ is not equivalent to $j$.
\end{theorem}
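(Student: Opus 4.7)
The plan is to establish the two inequalities $\z(\I_i)\leq p_{[i]}$ and $\z(\I_i)\geq p_{[i]}$, writing $p_{[i]}:=\sum_{j\Ralpha i}\I_j$ for the analogue of $p_A$ from Theorem~\ref{thm:equiv} associated to the $\Ralpha$-class of $i$; the orthogonality assertion will then be immediate.

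For the first (easy) inequality, Theorem~\ref{thm:equiv} yields that $p_{[i]}$ is $\alpha$-invariant, and we have already observed that the fixed point algebra of $\alpha$ coincides with $\cZ(\linf(\GGamma))\cap\linf(\LLambda\backslash\GGamma)$. Hence $p_{[i]}$ is a central projection in $\linf(\GGamma)$ dominating $\I_i$, so $\z(\I_i)\leq p_{[i]}$.

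The main step will be the reverse inequality. My key observation is that $\ww^{\hh\GGamma}\in\linf(\GGamma)\vtens\Linf(\hh\GGamma)$ commutes with every element of $\cZ(\linf(\GGamma))\tens\I$, simply because this is a central subalgebra of the ambient von Neumann algebra. Applying this with $c=\z(\I_i)$ and using $\I_i=\z(\I_i)\I_i$, I obtain
\[
\alpha(\I_i) = \ww^{\hh\GGamma}(\z(\I_i)\I_i\tens\I){\ww^{\hh\GGamma}}^* = (\z(\I_i)\tens\I)\,\alpha(\I_i).
\]
Now fix $j$ with $j\Ralpha i$; by Lemma~\ref{lem:easy} the component $\alpha_{ji}(\I_i)\in\sM_j\vtens\Linf(\hh\GGamma)$ is non-zero, and the identity above forces $\alpha_{ji}(\I_i)=(\z(\I_i)\tens\I)\alpha_{ji}(\I_i)$. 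Slicing by a suitable $\omega\in\Linf(\hh\GGamma)_*$ produces a non-zero $y:=(\id\tens\omega)\alpha_{ji}(\I_i)\in\sM_j$ with $\z(\I_i)y=y$. Then the range projection $s$ of $y$ (equivalently the support of $yy^*\in\sM_j$) is a non-zero projection in $\sM_j$ dominated by $\z(\I_i)$. Since $\sM_j$ is a factor, the two-sided ideal of $\linf(\GGamma)$ generated by any non-zero element of $\sM_j$ coincides with the one generated by $\I_j$, so $s$ and $\I_j$ have the same central support; hence $\z(\I_j)=\z(s)\leq\z(\I_i)$. The symmetry of $\Ralpha$ (Proposition~\ref{sym}) delivers the opposite inequality, so $\z(\I_i)=\z(\I_j)$ for every $j\in[i]$, and summing gives $p_{[i]}\leq\z(\I_i)$.

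If $i$ and $j$ are not $\Ralpha$-equivalent then $[i]$ and $[j]$ are disjoint, so $p_{[i]}p_{[j]}=0$, and the identifications $\z(\I_i)=p_{[i]}$, $\z(\I_j)=p_{[j]}$ yield the orthogonality. The delicate part will be the reverse inequality: marrying the centrality of $\z(\I_i)$ in $\linf(\GGamma)$ (which lets one pull it through the multiplicative unitary) with the factoriality of each $\sM_j$ (which upgrades a single non-zero slice to a bound on $\z(\I_j)$) at exactly the right point in the argument.
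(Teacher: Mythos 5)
Your argument is correct and shares the paper's overall skeleton: the easy inequality $\z(\I_i)\leq\sum_{j\Ralpha{i}}\I_j$ comes from invariance of the class projection (Theorem \ref{thm:equiv}) together with the identification of the fixed point algebra with $\cZ\bigl(\linf(\GGamma)\bigr)\cap\linf(\LLambda\backslash\GGamma)$, and the hard direction reduces in both cases to showing $\z(\I_j)\leq\z(\I_i)$ whenever $j\Ralpha{i}$. Where you differ is in how that key inequality is executed. The paper decomposes $\z(\I_i)$ into the minimal central projections $p_\kappa$, $\kappa\in\Irr{\hh{\GGamma}}$, and argues for each $\kappa$ outside $\supp_\GGamma\I_i$ by positivity: it applies $\id\tens\bh_{\hh{\GGamma}}$ to $(p_\kappa\tens\I)\alpha(\I_i)=0$ and uses faithfulness of the Haar state plus simplicity of $\sM_j$ to conclude $p_\kappa\I_j=0$. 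You instead pull $\z(\I_i)$ through the implementing unitary wholesale (legitimate, since $\cZ\bigl(\linf(\GGamma)\bigr)\tens\I$ is central in $\linf(\GGamma)\vtens\Linf(\hh{\GGamma})$), slice by an arbitrary normal functional, and use range projections together with the fact that all non-zero projections in the factor $\sM_j$ share the central support $\z(\I_j)$. Your version is somewhat leaner (no Haar state, no case analysis over $\kappa$), and your orthogonality argument --- disjoint classes give orthogonal sums of the pairwise orthogonal $\I_k$ --- is more direct than the paper's computation of $\alpha\bigl(\z(\I_i)\bigr)$. What the paper's formulation buys is the explicit identity $\supp_\GGamma\I_i=\supp_\GGamma\I_j$ for $i\Ralpha{j}$, which is recorded as Remark \ref{supports} and reused in Section \ref{VVeq}; your proof yields the same fact (equality of central supports is exactly equality of supports), but you would want to state it explicitly. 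One cosmetic point: the closing sentence of your write-up is a leftover planning note and should be deleted.
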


In particular for any $\kappa\in\Irr{\hh{\GGamma}}$ there exists $i\in\Ind$ such that
\begin{enumerate}
\item\label{thm:act_sub1} for all $j\in\Ind$ we have $p_\kappa\I_j\neq{0}$ if and only if $j\Ralpha{i}$,
\item\label{thm:act_sub2} we have $p_\kappa\biggl(\,\sum\Limits_{j\Ralpha{i}}\I_j\biggr)=p_\kappa$.
\end{enumerate}

\begin{proof}[Proof of theorem \ref{thm:act_sub}]
For $i\in\Ind$ denote by $\supp_\GGamma{\I_i}$ the set $\bigl\{\iota\in\Irr{\hh{\GGamma}}{\st{p_\iota}\I_i\neq{0}\bigr\}}$, which is the support of the central projection of $\I_i$ in $\linf(\GGamma)$.

Now take $\kappa\in\Irr{\hh{\GGamma}}$ and $i\in\Ind$. Suppose $\kappa\not\in\supp_\GGamma(\I_i)$, so that $p_\kappa\I_i=0$. We have
\begin{equation}\label{eq:av11}
\begin{split}
0=\ww^{\hh{\GGamma}}(p_\kappa\I_i\tens\I){\ww^{\hh{\GGamma}}}^*&=(p_\kappa\tens\I)\alpha(\I_i)\\
&=(p_\kappa\tens\I)\sum_{j\Ralpha{i}}\alpha_{ji}(\I_i)=\sum_{j\Ralpha{i}}(p_\kappa\tens\I)\alpha_{ji}(\I_i).
\end{split}
\end{equation}
Note that for $j\Ralpha{i}$ the positive element $\alpha_{ji}(\I_i)$ is non-zero.

Applying $\id\tens\bh_{\hh{\GGamma}}$ to both sides of \eqref{eq:av11} we obtain
\[
0=\sum_{j\Ralpha{i}}p_\kappa(\id\tens\bh_{\hh{\GGamma}})\alpha_{ji}(\I_i).
\]
As all the terms are positive, this implies
\[
p_\kappa(\id\tens\bh_{\hh{\GGamma}})\alpha_{ji}(\I_i)=0,\qqquad{j}\Ralpha{i},
\]
Denoting $(\id\tens\bh_{\hh{\GGamma}})\alpha_{ji}(\I_i)$ by $x_j$ we find (due to centrality of $p_\kappa$) that
\[
p_\kappa\bigl(\linf(\LLambda\backslash\GGamma)x_j\linf(\LLambda\backslash\GGamma)\bigr)=0,
\]
but $\linf(\LLambda\backslash\GGamma)x_j\linf(\LLambda\backslash\GGamma)$ is all of $\sM_j$ because $x_j\neq{0}$ (by faithfulness of $\bh_{\hh{\GGamma}}$) and $\sM_j$ is a matrix algebra.

This means that for any $j\Ralpha{i}$ we have $\kappa\not\in\supp_\GGamma{\I_j}$. By symmetry, we find that
\[
\supp_\GGamma\I_j=\supp_\GGamma\I_i,\qqquad{j\Ralpha{i}}.
\]

Now by the last part of Theorem \ref{thm:equiv} the element $\sum\Limits_{j\Ralpha{i}}\I_j$ is invariant under the action $\alpha$, i.e.
\[
\sum\Limits_{j\Ralpha{i}}\I_j\in\cZ\bigl(\linf(\GGamma)\bigr).
\]
In particular, denoting by $\z(\cdot)$ the central support of a projection in the algebra $\linf(\GGamma)$ we find that $\sum\Limits_{j\Ralpha{i}}\I_j\geq\z(\I_i)$.

Conversely, let $r$ be a projection in $\cZ\bigl(\linf(\GGamma)\bigr)$ and $r\geq\I_i$. Since for any $j\Ralpha{i}$ we have $\supp_\GGamma{\I_j}=\supp_\GGamma{\I_i}$, we see that if $r\geq\I_i$ then $r\geq\I_j$ for all $j\Ralpha{i}$ and consequently $r\geq\sum\Limits_{j\Ralpha{i}}\I_j$. It follows that $\z(\I_i)=\sum\Limits_{j\Ralpha{i}}\I_j$.

To finish the proof we have to show that $\z(\I_i)$ and $\z(\I_j)$ are orthogonal when $i$ and $j$ are not equivalent. However, we have then that
\[
0=\underset{j'\Ralpha{j}}{\underset{i'\Ralpha{i}}{\sum}}(\I_{j'}\tens\I)\alpha(\I_{i'})
=\bigl(\z(\I_j)\tens\I\bigr)\alpha\bigl(\z(\I_i)\bigr)=\z(\I_j)\z(\I_i)\tens\I,
\]
from which the statement follows.
\end{proof}

\begin{remark}\label{supports}
The proof of Theorem \ref{thm:act_sub} shows in particular that $i\Ralpha{j}$ if and only if $\supp_\GGamma\I_i=\supp_\GGamma\I_j$.
\end{remark}

The classical theorem of Clifford (\cite[Theorem 1]{clifford}) describes the restriction of an irreducible representation of a group $G$ to a normal subgroup $H$. Theorem \ref{thm:act_sub} provides a generalization of this result for compact quantum groups: let $\GG$ be a compact quantum group and $\HH$ its normal closed quantum subgroup. Representations of $\GG$ and $\HH$ correspond bijectively (and functorially) to normal representations of the von Neumann algebras $\linf(\hh{\GG})$ and $\linf(\hh{\HH})$ respectively. Functoriality of the correspondence means that all the structure of representations of $\GG$ and $\HH$ is preserved. This in particular applies to direct sums, tensor products, decomposition into irreducible representations etc. Each irreducible representation $\rho$ of $\linf(\hh{\HH})$ corresponds to a unique central projection $\I_\rho$ (its central cover, see \cite[Section 3.8.1]{pedersen}) and similarly for representations of $\linf(\hh{\GG})$. The algebra $\linf(\hh{\HH})$ is embedded into $\linf(\hh{\GG})$ so it makes sense to apply representations of $\linf(\hh{\GG})$ to elements of $\linf(\hh{\HH})$. Taking $\GGamma=\hh{\GG}$ and $\LLambda=\hh{\GG/\HH}$ we obtain a pair $(\GGamma,\LLambda)$ to which Theorem \ref{thm:act_sub} applies. In this case $\LLambda$ is also normal, so $\linf(\LLambda\backslash\GGamma)=\linf(\GGamma/\LLambda)=\linf(\hh{\HH})$ and the action $\alpha$ discussed above is an action of $\GG$ on $\linf(\hh{\HH})$. Moreover, by Theorem \ref{action} this action may be restricted to $\cZ\bigl(\linf(\hh{\HH})\bigr)$ and thus yields an action of $\GG$ on the classical set $\Irr{\HH}$. Clearly the equivalence relation on $\Irr{\HH}$ obtained from the action of $\GG$ on $\linf(\hh{\HH})=\prod\Limits_{\sigma\in\Irr{\HH}}M_{n_\sigma}(\CC)$ is the same as the one coming from the restriction of this action to $\cZ\bigl(\linf(\hh{\HH})\bigr)\cong\linf(\Irr{\HH})$ (cf.~Lemma \ref{lem:easy}). In this situation Theorem \ref{thm:act_sub} takes the following form:

\begin{theorem}\label{qCliff}
Let $\GG$ be a compact quantum group and $\HH$ a closed normal quantum subgroup of $\GG$. Then for an irreducible representation $\sigma$ of $\linf(\hh{\HH})$ the element
\[
\sum\Limits_{\rho\Ralpha\sigma}\I_\rho\in\linf(\hh{\HH})\subset\linf(\hh{\GG})
\]
is the central support $\z(\I_{\sigma})$ in $\linf(\hh{\GG})$ of the projection $\I_\sigma$, and $\z(\I_{\sigma})$ is orthogonal to $\z(\I_{\rho})$ for $\sigma$ and $\rho$ not in the same orbit.
\end{theorem}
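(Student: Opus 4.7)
The plan is to obtain Theorem \ref{qCliff} as a direct specialization of Theorem \ref{thm:act_sub} under the dictionary set up in the paragraph immediately preceding the statement. The whole scheme is: choose $\GGamma$ and $\LLambda$, verify the algebraic identifications, match up the two equivalence relations, and quote.

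First, I would set $\GGamma:=\hh{\GG}$ and $\LLambda:=\hh{\GG/\HH}$. Since $\HH$ is closed and normal in $\GG$, the quotient $\GG/\HH$ is a well-defined compact quantum group and $\LLambda$ is a closed quantum subgroup of the discrete quantum group $\GGamma$, moreover normal (as recalled in the introduction). Normality of $\LLambda$ in $\GGamma$ gives the equality $\linf(\LLambda\backslash\GGamma)=\linf(\GGamma/\LLambda)$, and by construction this subalgebra of $\linf(\GGamma)=\linf(\hh{\GG})$ is precisely $\linf(\hh{\HH})$.

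Next, I would identify the two actions. The action $\alpha\colon\linf(\LLambda\backslash\GGamma)\to\linf(\LLambda\backslash\GGamma)\vtens\Linf(\hh{\GGamma})$ used in Section \ref{Adam} is the restriction of $x\mapsto\ww^{\hh{\GGamma}}(x\tens\I){\ww^{\hh{\GGamma}}}^*$, which in our setting is exactly the adjoint (Clifford-type) action of $\GG$ on $\linf(\hh{\HH})$. Theorem \ref{action} guarantees that this action further restricts to $\cZ\bigl(\linf(\hh{\HH})\bigr)\cong\linf(\Irr{\HH})$, and by Lemma \ref{lem:easy} the equivalence relation induced on $\Irr{\HH}$ by the central action coincides with the relation $\Ralpha$ arising from the factor decomposition of $\linf(\hh{\HH})$, because $\sigma\Ralpha\rho$ holds iff the corresponding factor-to-factor component of $\alpha$ is non-zero, which in turn translates to the corresponding points of $\Irr{\HH}$ being identified by the classical action.

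Finally, I would read off the decomposition
\[
\linf(\hh{\HH})=\prod_{\sigma\in\Irr{\HH}}M_{n_\sigma}(\CC),
\]
which realizes the factor decomposition $\linf(\LLambda\backslash\GGamma)=\prod_{i\in\Ind}\sM_i$ of Section \ref{Adam} with $\Ind=\Irr{\HH}$, $\sM_\sigma=M_{n_\sigma}(\CC)$, and $\I_\sigma$ the unit of the $\sigma$-th factor; this is precisely the central cover of the irreducible representation $\sigma$ of $\linf(\hh{\HH})$. Plugging this into Theorem \ref{thm:act_sub} yields at once that $\z(\I_\sigma)=\sum_{\rho\Ralpha\sigma}\I_\rho$ inside $\linf(\hh{\GG})$, together with the orthogonality of $\z(\I_\sigma)$ and $\z(\I_\rho)$ whenever $\sigma$ and $\rho$ lie in distinct orbits. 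There is no genuine obstacle: the substantive content has already been established as Theorem \ref{thm:act_sub}. The only point meriting a line of care is the identification $\linf(\LLambda\backslash\GGamma)=\linf(\hh{\HH})$, which relies crucially on the normality hypothesis and without which left and right cosets would give distinct subalgebras.
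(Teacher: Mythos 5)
Your proposal is correct and follows exactly the paper's route: Theorem \ref{qCliff} is obtained there precisely as the specialization of Theorem \ref{thm:act_sub} to $\GGamma=\hh{\GG}$, $\LLambda=\hh{\GG/\HH}$, using normality to identify $\linf(\LLambda\backslash\GGamma)=\linf(\GGamma/\LLambda)=\linf(\hh{\HH})$ and matching the factor decomposition with $\Irr{\HH}$. Nothing is missing.
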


The interpretation of Theorem \ref{qCliff} is that an irreducible representation $\pi$ of the compact quantum group $\GG$ when restricted to the normal quantum subgroup $\HH$ is equivalent to a direct sum of irreducible representations of $\HH$ which form precisely one orbit under the action of $\GG$. Note also that the equivalence class of the trivial representation of $\HH$ with respect to $\Ralpha$ consists of one element. Thus the above theorem says in particular that if a restriction of an irreducible representation of a compact quantum group $\GG$ to a normal quantum subgroup $\HH$ contains the trivial representation of $\HH$, then in fact this restriction is a multiple of the trivial representation. This is in fact the original definition of normality for subgroups of compact quantum groups in \cite[Page 679]{WangFree} (cf.~\cite[Proposition 2.1]{Wangconnected}). Furthermore, if $\HH$ is a \emph{central} subgroup of $\GG$, as defined in \cite{Wangconnected} (see also \cite{Patri}), the algebra $\linf(\hh{\HH})$ is contained in $\cZ\bigl(\linf(\hh{\GG})\bigr)$ by \cite{centers}, so that the action $\alpha$ discussed above trivializes, each orbit consists of one element (which must be an irreducible representation of $\HH$ of dimension 1, as $\HH$ is cocommutative) and Theorem \ref{qCliff} gives another proof of the forward implication of \cite[Theorem 6.3]{Patri}.

In classical Clifford theory, we consider the action of a group $G$ on the set or irreducible representations of its normal subgroup $H$ by composing with inner automorphisms. It is clear that this action preserves dimension, i.e.~irreducible representations of $H$ belonging to one class are all of the same dimension. In the setting of quantum groups we have the following result.

\begin{proposition}\label{dimprop}
Let $\GG$ be a compact quantum group of Kac type and let $\HH$ be a closed normal quantum subgroup of $\GG$. Then any two irreducible representations $\sigma$ and $\tau$ of $\HH$ in the same orbit have the same dimension. Moreover, if $\pi$ is any irreducible representation of $\GG$ with $\pi(\I_{\sigma})\neq 0$, then also the multiplicity of $\sigma$ in $\pi$ is the same as the multiplicity of $\tau$ in $\pi$.
\end{proposition}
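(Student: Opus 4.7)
The plan is to attach to each ordered pair $\sigma,\tau\in\Irr\HH$ a non-negative number $c_{\tau\sigma}$ encoding the partial map $\alpha_{\tau\sigma}$, and to compute it in two independent ways on the orbit $A$ of $\sigma$; the Kac hypothesis enters through the tracial Haar state. Since $\bh_\GG$ is tracial, $\Tr_{n_\tau}\tens\bh_\GG$ is a trace on $M_{n_\tau}(\CC)\tens\Linf(\GG)$, so its pullback by the $*$-homomorphism $\alpha_{\tau\sigma}$ is a tracial functional on the factor $M_{n_\sigma}(\CC)$ and hence equals $c_{\tau\sigma}\Tr_{n_\sigma}$ for a unique $c_{\tau\sigma}\geq 0$; by Lemma~\ref{lem:easy} one has $c_{\tau\sigma}>0$ precisely when $\sigma\Ralpha\tau$ (throughout, $\Tr_n$ denotes the un-normalised matrix trace on $M_n(\CC)$).

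By Theorem~\ref{action}, $\alpha$ restricts to an action on $\cZ(\linf(\hh\HH))\cong\linf(\Irr\HH)$ whose orbits coincide with those of $\alpha$, so one can write $\alpha(\I_\tau)=\sum_\rho\I_\rho\tens u_{\rho\tau}$ with projections $u_{\rho\tau}\in\Linf(\GG)$, and Remark~\ref{hN} applied to this induced classical action gives $\bh_\GG(u_{\sigma\tau})=1/|A|$ for $\sigma,\tau\in A$. Evaluating $(\Tr_{n_\sigma}\tens\bh_\GG)(\alpha_{\sigma\tau}(\I_\tau))$ via the definition of $c_{\sigma\tau}$ and via the explicit central expression yields
\[
c_{\sigma\tau}\;=\;\frac{n_\sigma}{|A|\,n_\tau}.
\]

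A second relation comes from the identification of $\alpha$ with the restriction to $\linf(\hh\HH)$ of the adjoint action $x\mapsto\ww^\GG(x\tens\I)(\ww^\GG)^*$ on $\linf(\hh\GG)$. For Kac $\GG$ the Plancherel trace on $\linf(\hh\GG)$ is the restriction of the canonical un-normalised trace on $\B(\Ltwo(\GG))$, and the latter is invariant under conjugation by the unitary $\ww^\GG$ by the standard identity $(\Tr\tens\id)(\ww^\GG(\,\cdot\,\tens\I)(\ww^\GG)^*)=\Tr(\,\cdot\,)\I$; consequently its further restriction $\sum_\sigma n_\sigma\Tr_{n_\sigma}$ to $\linf(\hh\HH)$ is $\alpha$-invariant. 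Applied to $\I_\sigma$ and composed with $\bh_\GG$ this reads $\sum_{\tau\in A}n_\tau c_{\tau\sigma}=n_\sigma$; substituting the displayed formula yields $\sum_{\tau\in A}n_\tau^2=|A|\,n_\sigma^2$, whose left-hand side is independent of $\sigma$, so $n_\sigma=n_\tau$ for all $\sigma,\tau\in A$.

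For the multiplicity claim, fix $\pi\in\Irr\GG$ with $\pi(\I_\sigma)\neq 0$, so that by Theorem~\ref{qCliff} $\pi|_\HH$ is supported on the orbit $A$. Pushing $\alpha$ through $\pi$ realises its restriction to $\prod_{\sigma\in A}M_{n_\sigma}(\CC)$ as $y\mapsto U^\pi(y\tens\I)(U^\pi)^*$ on $\pi(\linf(\hh\HH))\subset\B(\sH_\pi)$, and the same trace-plus-unitary identity forces $\Tr_{n_\pi}$ on $\B(\sH_\pi)$ to be preserved; its restriction to $\pi(\linf(\hh\HH))\cong\bigoplus_{\sigma\in A}M_{n_\sigma}(\CC)$ equals $\sum_\sigma m_{\pi,\sigma}\Tr_{n_\sigma}$. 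Invariance of this trace, evaluated at $\I_\sigma$, gives $\sum_\tau m_{\pi,\tau}c_{\tau\sigma}=m_{\pi,\sigma}$. Since $n_\sigma$ is now constant on $A$, the displayed formula simplifies to $c_{\tau\sigma}=1/|A|$, so $m_{\pi,\sigma}$ equals the average of $\{m_{\pi,\tau}\}_{\tau\in A}$ and consequently all these multiplicities coincide. The two main technical points are the matching of the central matrix elements $u_{\sigma\tau}$ with those of Section~\ref{Adam} (so that Remark~\ref{hN} applies directly) and the identification of the Plancherel trace with the restriction of the canonical trace on $\B(\Ltwo(\GG))$, which is where the Kac assumption is essential.
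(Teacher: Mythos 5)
Your first relation is sound: the definition of $c_{\tau\sigma}$ via the tracial functional $\Tr_{n_\tau}\tens\bh_\GG$, its computation $c_{\tau\sigma}=n_\tau/(|A|\,n_\sigma)$ through Theorem~\ref{action} and Remark~\ref{hN}, and the closing multiplicity argument are all correct. Your overall strategy --- pin down the $\alpha$-invariant tracial functional on an orbit block and compare it with $\sum_{\tau\in A}n_\tau\Tr_{n_\tau}$ --- is also essentially the paper's, which obtains exactly that invariance from ergodicity of $\alpha$ on $B=\bigoplus_{\tau\Ralpha\sigma}M_{n_\tau}(\CC)$ together with the Markov-trace characterisation of \cite[Proposition 20]{DFY}.

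The gap is in your derivation of the second relation $\sum_{\tau\in A}n_\tau c_{\tau\sigma}=n_\sigma$. It is true for Kac $\GG$ that $\Tr_{\B(\Ltwo(\GG))}$ restricts on $\linf(\hh{\GG})=\prod_\iota M_{n_\iota}(\CC)$ to $\sum_\iota n_\iota\Tr_{n_\iota}$ and that this weight is invariant under $\mathrm{Ad}(\ww^\GG)$; note, though, that the sliced identity you call standard fails for general unitaries (for the tensor flip $\Sigma$ on $\CC^2\tens\CC^2$ one gets $(\Tr\tens\id)\bigl(\Sigma(x\tens\I)\Sigma^*\bigr)=2x\neq\Tr(x)\I$), and holds here only because Kac-ness makes each conjugate $\overline{U^\iota}$ unitary. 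The real problem is the next step: the \emph{further} restriction of this weight to the subalgebra $\linf(\hh{\HH})$ is \emph{not} $\sum_\sigma n_\sigma\Tr_{n_\sigma}$. A minimal projection $e^{\sigma}_{11}$ of the block $M_{n_\sigma}(\CC)\subset\linf(\hh{\HH})$ is represented in the block $M_{n_\iota}(\CC)$ of $\linf(\hh{\GG})$ by a projection of rank $\mult_\iota(\sigma)$, so the restricted weight assigns it the value $\sum_{\iota\in\Irr{\GG}}n_\iota\mult_\iota(\sigma)$; this is $+\infty$ as soon as infinitely many $\iota$ contain $\sigma$ (for $\sigma$ trivial it equals $\sum_{\iota\in\Irr{\GG/\HH}}n_\iota^2=\dim\Linf(\GG/\HH)$), and even when finite it is not $n_\sigma$ without a further Frobenius-reciprocity computation. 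So the $\alpha$-invariance of $\sum_\sigma n_\sigma\Tr_{n_\sigma}$ --- which is the crux of the whole proof, and is precisely what the paper extracts from \cite[Proposition 20]{DFY}, since on each orbit block this functional \emph{is} the Markov trace of an ergodic Kac-type action --- is asserted but not established. Until that is supplied, the equality of dimensions, and with it the final step $c_{\tau\sigma}=1/|A|$ of your multiplicity argument, remains unproved.
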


\begin{proof}
Fix $\sigma$ an irreducible representation of $\HH$, and $\pi$ an irreducible representation of $\GG$ such that $\pi(\I_{\sigma})\neq 0$. Consider the action of $\GG$ on $M_{n_\pi}(\CC)$ given by the formula
\[
x\longmapsto{V}(x\tens\I)V^*,
\]
where $V=(\pi\tens\id)\ww^\GG$. As the fixed point algebra must in this case be contained in the center of $M_{n_\pi}(\CC)$, the action is ergodic. Let $\Tr$ denote the standard (non-normalized) trace on the summand $M_{n_\pi}(\CC)$ of $\linf(\GGamma)$. As $\GG$ is of Kac type, the invariant state must be tracial, and hence $\Tr$ is preserved. By Proposition \ref{countpreserve} its restriction to $\pi\bigl(\cZ\bigl(\linf(\hh{\HH})\bigr)\bigr)$ must be a multiple of the counting measure, hence
\[
\dim\pi(\I_{\sigma})=\Tr\bigl(\pi(\I_{\sigma})\bigr)=\Tr\bigl(\pi(\I_{\tau})\bigr)=\dim\pi(\I_{\tau})
\]
for $\sigma$ and $\tau$ in the same orbit.

On the other hand, let
\[
B=\bigoplus_{\tau\Ralpha\sigma}M_{n_{\tau}}(\CC)\subset\linf(\hh{\HH}),
\]
and write $e_{kl}^{\tau}$ for the associated matrix units. Once again considering an appropriate restriction of the action $\alpha$ to $B$ we first deduce that the fixed point algebra must be contained in the center of $B$, and then arguing as in Remark \ref{ergodic=oneorbit} we see that this restriction is ergodic. As $\GG$ is of Kac type, it follows from \cite[Proposition 20]{DFY} (see also \cite{BanicaSym}) that the Markov trace
\[
\Tr_{\text{\rm{\tiny{M}}}}(e_{kl}^{\tau})=\delta_{kl}n_{\tau}
\]
is up to a scalar the unique invariant functional on $B$. As also $\bigl.\Tr\comp\pi\bigr|_B$ is a non-zero invariant positive functional on $B$, we deduce that there exists a positive scalar $c_{\pi}>0$ such that for any $k$
\[
c_{\pi}n_{\sigma}= c_{\pi}\Tr_{\text{\rm{\tiny{M}}}}(e_{kk}^{\sigma})=\Tr\bigl(\pi(e_{kk}^{\sigma})\bigr)=\mult_{\pi}(\sigma),
\]
with $\mult_{\pi}(\sigma)$ the multiplicity of $\sigma$ inside $\pi$.

It thus follows that
\[
\dim\pi(\I_{\sigma})=\dim(\sigma)\mult_{\pi}(\sigma)=c_{\pi}n_{\sigma}^2,
\]
and hence $\sigma\mapsto{n_{\sigma}}$ and $\sigma\mapsto\mult_{\pi}(\sigma)$ are constant on orbits.
\end{proof}

\subsection{Vergnioux-Voigt equivalence}\label{VVeq}

So far, given a discrete quantum group $\GGamma$ with a quantum subgroup $\LLambda$, we defined an action of $\hh{\GGamma}$ on $\linf(\LLambda\backslash\GGamma)$ and hence an equivalence relation on the index set $\Ind$ of the decomposition
\[
\linf(\LLambda\backslash\GGamma)=\prod_{i\in\Ind}\sM_i
\]
with each $\sM_i$ a matrix algebra. On the other hand in \cite{orientation} an equivalence relation on $\Irr{\hh{\GGamma}}$ was introduced as follows: since $\Linf(\hh{\LLambda})$ is an invariant subalgebra of $\Linf(\hh{\GGamma})$, any representation of $\hh{\LLambda}$ is, in particular, a representation of $\hh{\GGamma}$, so we may see $\Irr{\hh{\LLambda}}\subset\Irr{\hh{\GGamma}}$ and we can take tensor products between elements of $\Irr{\hh{\GGamma}}$ and $\Irr{\hh{\LLambda}}$. Take now $\sigma,\tau\in\Irr{\hh{\GGamma}}$. We write $\sigma\RLambda\tau$ if there exists $\gamma\in\Irr{\hh{\LLambda}}$ such that $\tau\subset\sigma\tp\gamma$. This equivalence relation was later put to use e.g.~in \cite{VergniouxVoigt}.

It turns out that the equivalence relations $\RLambda$ and $\Ralpha$ are related (note that these are equivalence relations on different sets). In order to state this relationship recall from the proof of Theorem \ref{thm:act_sub} that for $i\in\Ind$ -- the set indexing simple summands of $\linf(\LLambda\backslash\GGamma)$ -- the symbol $\supp_\GGamma{\I_i}$ denotes the set of those $\iota\in\Irr{\hh{\GGamma}}$ for which $p_\iota\I_i\neq{0}$. Recall form Remark \ref{supports} that $i\Ralpha{j}$ if and only if $\supp_\GGamma\I_i=\supp_\GGamma\I_j$.

\begin{theorem}
Let $\GGamma$ be a discrete quantum group and let $\LLambda$ be its quantum subgroup. Then two elements $\sigma,\tau\in\Irr{\hh{\GGamma}}$ satisfy $\sigma\RLambda\tau$ if and only if there exists $i\in\Ind$ such that $\sigma,\tau\in\supp_\GGamma(\I_i)$.
\end{theorem}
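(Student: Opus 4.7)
The plan is to reduce the statement to a single identity governing the central projection $\z(\I_i) = \sum_{\iota \in \supp_\GGamma \I_i} p_\iota$ furnished by Theorem~\ref{thm:act_sub} and Remark~\ref{supports}. Since $\z(\I_i)$ lies in $\linf(\LLambda\backslash\GGamma)$, the defining left $\LLambda$-invariance $(\bpi \tens \id)\Delta_\GGamma(\z(\I_i)) = \I \tens \z(\I_i)$, combined with the fact that $\bpi$ restricts to a $*$-isomorphism on each block $p_\gamma \linf(\GGamma)$ for $\gamma \in \Irr{\hh{\LLambda}}$, promotes itself to the key identity
\[
(p_\gamma \tens p_\tau)\Delta_\GGamma(\z(\I_i)) = p_\gamma \tens p_\tau \z(\I_i), \qquad \gamma \in \Irr{\hh{\LLambda}},\ \tau \in \Irr{\hh{\GGamma}}.
\]

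The right hand side is non-zero exactly when $\tau \in \supp_\GGamma \I_i$. Expanding the left hand side via $\z(\I_i) = \sum_\sigma p_\sigma$ presents it as a sum of mutually orthogonal Clebsch--Gordan projections, each of which is nonzero precisely when $\sigma \subset \gamma \tp \tau$. Matching both sides yields the equivalence
\[
\tau \in \supp_\GGamma \I_i \iff \exists\ \sigma \in \supp_\GGamma \I_i,\ \gamma \in \Irr{\hh{\LLambda}}:\ \sigma \subset \gamma \tp \tau,
\]
which, modulo Frobenius reciprocity and the closure of $\Irr{\hh{\LLambda}}$ under contragredients, is the Vergnioux--Voigt relation $\sigma \RLambda \tau$. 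This yields one direction of the theorem at once: given $\sigma \RLambda \tau$, choose $i$ with $\sigma \in \supp_\GGamma \I_i$, and the displayed equivalence forces $\tau \in \supp_\GGamma \I_i$.

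For the converse I would strengthen the argument by observing that, when $\tau \in \supp_\GGamma \I_i$, the sum of Clebsch--Gordan projections already exhausts the full block $p_\gamma \tens p_\tau$, so \emph{every} $\sigma$ appearing in $\gamma \tp \tau$ lies in $\supp_\GGamma \I_i$. Thus $\supp_\GGamma \I_i$ is closed under the Vergnioux--Voigt move, hence is a union of $\RLambda$-classes. To rule out any splitting, for an $\RLambda$-saturated $\mathcal{C} \subset \supp_\GGamma \I_i$ the same invariance check places the central projection $e_\mathcal{C} = \sum_{\iota \in \mathcal{C}} p_\iota$ in $\linf(\LLambda\backslash\GGamma) \cap \cZ(\linf(\GGamma))$; since $\alpha$ acts trivially on $\cZ(\linf(\GGamma))$, this intersection equals $\bigoplus_{[i']} \CC\, \z(\I_{i'})$, whose minimal projections are the central supports themselves, forcing $e_\mathcal{C} \in \{0, \z(\I_i)\}$ and hence $\mathcal{C} = \supp_\GGamma \I_i$. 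The main bookkeeping hurdle is reconciling the tensor order $\sigma \subset \gamma \tp \tau$ arising from the identity with the paper's definition $\tau \subset \sigma \tp \gamma$ of $\RLambda$; this boils down to a routine Frobenius manipulation together with the $\gamma \mapsto \gamma^\cc$ symmetry on $\Irr{\hh{\LLambda}}$.
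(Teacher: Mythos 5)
Your argument is correct, and it takes a genuinely different route from the paper's. The paper works with the support projection $\I_\LLambda$ of the open subgroup: it invokes $\Delta_\GGamma(\I_\LLambda)(\I\tens x)=\Delta_\GGamma(\I_\LLambda)\bigl(R^\GGamma(x)\tens\I\bigr)$ and $\Delta_\GGamma(\I_\LLambda)(\I_i\tens\I)\neq 0$ from the theory of open quantum subgroups, decomposes $\Delta_\GGamma(\I_\LLambda)$ over the blocks $\I_j\tens R^\GGamma(\I_j)$, applies $\sigma\tens\tau^\cc$ and uses positivity to conclude $(\sigma\tp\tau^\cc)(\I_\LLambda)\neq 0$, i.e.\ $\gamma\subset\sigma\tp\tau^\cc$ for some $\gamma\in\Irr{\hh{\LLambda}}$; the converse is dispatched by ``backtracking''. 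You instead feed the central projection $\z(\I_i)=\sum_{\iota\in\supp_\GGamma\I_i}p_\iota$ into the invariance condition defining $\linf(\LLambda\backslash\GGamma)$ and read the result block by block, which is legitimate since $\bpi$ is indeed multiplication by $\I_\LLambda=\sum_{\gamma\in\Irr{\hh{\LLambda}}}p_\gamma$ followed by the block identification; the resulting fusion-rule identity is correct, and your positivity/orthogonality reading of it gives the forward implication cleanly. Your converse via saturation of $\supp_\GGamma\I_i$ under the tensoring move, plus minimality of the $\z(\I_{i'})$ among central projections of $\linf(\GGamma)$ lying in $\linf(\LLambda\backslash\GGamma)$ (which does follow from Theorem \ref{thm:act_sub}), is an explicit argument where the paper only gestures, and is to my mind an improvement. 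What your route buys is independence from the unitary antipode and from \cite[Theorem 3.3 and Eq.\ (3.6)]{KalKS}, at the price of leaning on Theorem \ref{thm:act_sub} as a black box (the paper's proof also uses its notation and Remark \ref{supports}, so this is a fair trade). The one step you defer --- converting ``$\sigma\subset\gamma\tp\tau$ for some $\gamma\in\Irr{\hh{\LLambda}}$'', equivalently ``$\gamma\subset\sigma\tp\tau^\cc$'', into the stated form $\tau\subset\sigma\tp\gamma$ of $\RLambda$ --- is exactly the condition the paper itself arrives at and resolves by citing \cite[Proposition 3.2]{q-lorentz} together with closure of $\Irr{\hh{\LLambda}}$ under contragredients, so you should make that citation explicit rather than calling it routine, but it is not a gap specific to your approach.
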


\begin{proof}
We will identify a representation $\pi$ of $\hh{\GGamma}$ with the corresponding representation of the von Neumann algebra $\linf(\GGamma)$ and, as before, $p_\pi$ will denote the corresponding central projection in $\linf(\hh{\GGamma})$.

As we already noted, $\LLambda$ is an open quantum subgroup of $\GGamma$. Let $\I_\LLambda$ be the support of $\LLambda$ (\cite[Section 2]{KalKS}). Then $\I_\LLambda\in\linf(\LLambda\backslash\GGamma)$ and thus $\Delta_\GGamma(\I_\LLambda)\in\linf(\LLambda\backslash\GGamma)\vtens\linf(\GGamma)$. By \cite[Theorem 3.3]{KalKS} for each $i\in\Ind$ we have $\Delta_\GGamma(\I_\LLambda)(\I_i\tens\I)\neq{0}$ and moreover each of these elements is positive.

By \cite[Equation (3.6) \& following remarks]{KalKS} we have
\[
\Delta_\GGamma(\I_\LLambda)(\I\tens{x})=\Delta_\GGamma(\I_\LLambda)\bigl(R^\GGamma(x)\tens\I\bigr)
\]
for all $x\in\linf(\GGamma/\LLambda)=R^\GGamma\bigl(\linf(\LLambda\backslash\GGamma)\bigr)$. Thus
\[
\Delta_\GGamma(\I_\LLambda)\bigl(\I\tens{R^\GGamma(y)}\bigr)=\Delta_\GGamma(\I_\LLambda)(y\tens\I),\qqquad{y}\in\linf(\LLambda\backslash\GGamma).
\]
Using this with $y=\I_j$ for $j\in\Ind$ we obtain
\[
\Delta_\GGamma(\I_\LLambda)(\I_j\tens\I)
=\Delta_\GGamma(\I_\LLambda)(\I_j\tens\I)^2=\Delta_\GGamma(\I_\LLambda)\bigl(\I\tens{R^\GGamma(\I_j)}\bigr)(\I_j\tens\I)
=\Delta_\GGamma(\I_\LLambda)\bigl(\I_j\tens{R^\GGamma(\I_j)}\bigr).
\]
Now summing over $j$ we obtain
\begin{equation}\label{DelI}
\Delta_\GGamma(\I_\LLambda)=\sum_{j\in\Ind}\Delta_\GGamma(\I_\LLambda)\bigl(\I_j\tens{R^\GGamma(\I_j)}\bigr).
\end{equation}

Now let $\sigma,\tau\in\Irr{\hh{\GGamma}}$ be such that $\sigma,\tau\in\supp_\GGamma\I_i$ for some $i\in\Ind$. This means precisely that
\[
\sigma(\I_i)\tens\tau^\cc\bigl(R^\GGamma(\I_i)\bigr)\neq{0}.
\]
which in turn is equivalent to
\[
\ker{\bigl.\sigma\tens\tau^\cc\bigr|_{\sM_i\tens{R^\GGamma(\sM_i)}}}=\{0\}
\]
by simplicity of $\sM_i\tens{R^\GGamma(\sM_i)}$. Now let us apply $\sigma\tens\tau^\cc$ to both sides of \eqref{DelI}:
\[
(\sigma\tens\tau^\cc)\Delta_\GGamma(\I_\LLambda)=\sum_{j\in\Ind}(\sigma\tens\tau^\cc)\bigl(\Delta_\GGamma(\I_\LLambda)(\I_j\tens{R^\GGamma(\I_j)})\bigr).
\]
We see that at least one of the summands on the right hand side is non zero. But as all terms are positive, we find that
\[
(\sigma\tens\tau^\cc)\Delta_\GGamma(\I_\LLambda)\neq{0}
\]
or, in other words, $(\sigma\tp\tau^\cc)(\I_\LLambda)\neq{0}$. Now from the fact that $\I_\LLambda=\sum\Limits_{\gamma\in\Irr{\hh{\LLambda}}}p_\gamma$ (\cite[Section 2.4]{fima}) we immediately obtain that there exists $\gamma\in\Irr{\hh{\LLambda}}$ such that $\gamma\subset\sigma\tp\tau^\cc$ which is equivalent to $\tau\subset\sigma\tp\gamma$ by \cite[Proposition 3.2]{q-lorentz}. Conversely, if $\sigma\RLambda\tau$ the above argument can be backtracked to prove the existence of an element $i$ with $\sigma,\tau\in\supp_{\GGamma}(\I_i)$, proving the theorem.
\end{proof}

\subsection*{Acknowledgements}

The second and fourth authors were partially supported by the National Science Center (NCN) grant no.~2015/17/B/ST1/00085. The third author was partially supported by the National Science Center (NCN) grant no.~2014/14/E/ST1/00525. The first author was partially supported by the FWO grant G.0251.15N.~and by the grant H2020-MSCA-RISE-2015-691246-QUANTUM DYNAMICS.


\printbibliography

\end{document}